\crefname{appsec}{Appendix}{Appendices}
\newtheorem{theorem}{Theorem}[section]
\newtheorem*{namedtheorem}{\theoremname}
\newcommand{\theoremname}{testing}
\newtheorem{lemma}[theorem]{Lemma}
\newtheorem{proposition}[theorem]{Proposition}
\newtheorem{corollary}[theorem]{Corollary}
\newtheorem*{question*}{Question}
\theoremstyle{definition}
\newtheorem{definition}[theorem]{Definition}
\newtheorem{remark}[theorem]{Remark}
\theoremstyle{plain}
\title{A note on the universality of ESDs of inhomogeneous random matrices}
\author{
Vishesh Jain\thanks{Email: {\tt vishesh.vj@gmail.com}. 
}
\and 
Sandeep Silwal\thanks{Massachusetts Institute of Technology. Department of Computer Science. Email: {\tt silwal@mit.edu}. 
}
}
\date{}
\DeclareMathOperator{\LCD}{LCD}
\DeclareMathOperator{\CRLCD}{CRLCD}
\DeclareMathOperator{\Ber}{Ber}
\DeclareMathOperator{\dist}{dist}
\DeclareMathOperator{\Sparse}{Sparse}
\DeclareMathOperator{\Comp}{Comp}
\DeclareMathOperator{\Incomp}{Incomp}
\DeclareMathOperator{\HS}{HS}
\begin{document}
\maketitle
\global\long\def\R{\mathbb{R}}

\global\long\def\S{\mathbb{S}}

\global\long\def\Z{\mathbb{Z}}

\global\long\def\C{\mathbb{C}}

\global\long\def\Q{\mathbb{Q}}

\global\long\def\N{\mathbb{N}}

\global\long\def\P{\mathbb{P}}

\global\long\def\F{\mathbb{F}}

\global\long\def\U{\mathcal{U}}

\global\long\def\V{\mathcal{V}}

\global\long\def\E{\mathbb{E}}

\global\long\def\Ev{\mathscr{Rk}}

\global\long\def\Dg{\mathscr{D}}

\global\long\def\Ndg{\mathscr{ND}}

\global\long\def\Rv{\mathcal{R}}

\global\long\def\Gv{\mathscr{Null}}

\global\long\def\Hv{\mathscr{Orth}}

\global\long\def\Supp{{\bf Supp}}

\global\long\def\Sv{\mathscr{Spt}}

\global\long\def\ring{\mathfrak{R}}

\global\long\def\1{\mathbbm{1}}

\global\long\def\Bad{{\boldsymbol{B}}}

\global\long\def\supp{{\bf supp}}

\global\long\def\A{\mathcal{A}}

\global\long\def\L{\mathcal{L}}


\begin{abstract}
In this short note, we extend the celebrated results of Tao and Vu, and Krishnapur on the universality of empirical spectral distributions to a wide class of \emph{inhomogeneous} complex random matrices, by showing that a technical and hard-to-verify Fourier domination assumption may be replaced simply by a natural uniform anti-concentration assumption. 

Along the way, we show that inhomogeneous complex random matrices, whose expected squared Hilbert-Schmidt norm is quadratic in the dimension, and whose entries (after symmetrization) are uniformly anti-concentrated at $0$ and infinity, typically have smallest singular value $\Omega(n^{-1/2})$. The rate $n^{-1/2}$ is sharp, and closes a gap in the literature.

Our proofs closely follow recent works of Livshyts, and Livshyts, Tikhomirov, and Vershynin on inhomogeneous \emph{real} random matrices. The new ingredient is a couple of anti-concentration inequalities for sums of independent, but not necessarily identically distributed, complex random variables, which may also be useful in other contexts. 
\end{abstract}

\section{Introduction}

\subsection{The least singular value of inhomogeneous, heavy-tailed, complex random matrices} 
The (ordered) \emph{singular values} of an $n\times n$ complex matrix $A_{n}$, denoted by $s_k(A_n)$ for $k\in [n]$, are defined to be the eigenvalues of $\sqrt{A_{n}^{\dagger}A_n}$ arranged in non-decreasing order. Recall that the extreme singular values $s_1(A_n)$ and $s_n(A_n)$ admit the following variational characterization:
\begin{align}\label{eq:characterization}
    s_1(A_n):= \sup_{x\in \S^{n-1}_{\C}}\|A_{n}x\|_{2},    \quad 
  s_n(A_n):= \inf_{x\in \S^{n-1}_{\C}}\|A_{n}x\|_{2},  
\end{align}
where $\|\cdot \|_{2}$ denotes the standard Euclidean norm in $\C^{n}$, and $\S_{\C}^{n-1}$ denotes standard unit sphere in $\C^{n}$. 
In this short note, we will primarily be concerned with the non-asymptotic study of the smallest singular value $s_n(A_n)$ (for quite general random matrices $A_n$) -- a subject which has its origins in numerical linear algebra, and which has attracted much attention in recent years (see, for instance, the references in \cite{livshyts2019smallest}). 

When the entries of $A_n$ are i.i.d. complex Gaussians, Edelman \cite{edelman1988eigenvalues} showed that for any $\epsilon > 0$,
$$\Pr\left(s_n(A_n) \leq \epsilon n^{-1/2}\right) \leq \epsilon^{2};$$
in particular, this shows that for any $\delta > 0$, with probability at least $1-\delta$, $s_n(A_n) = \Omega_{\delta}(n^{-1/2})$. In other words, the smallest singular value of a `typical realization' of an i.i.d. complex Gaussian matrix is at least order $n^{-1/2}$ (which is known to be optimal).

As our first main result, we establish the optimal order of $s_n(A_n)$ for a typical realization of $A_n$ for very general ensembles of random matrices -- this is a complex analogue of a recent theorem of Livshyts \cite{livshyts2018smallest} (see the discussion below). 
\begin{theorem}
\label{thm:lsv-bound}
Let $A_n$ be an $n\times n$ complex random matrix whose entries $A_{i,j}$ are independent and satisfy the following two conditions: $\sum_{i,j}\E|A_{i,j}|^{2}   \leq Kn^{2}$ for some $K > 0$ and $\Pr\left(b^{-1} \geq |\widetilde{A_{i,j}}| \geq b \right) \geq b$ for some $b\in (0,1)$ (here, $\widetilde{A_{i,j}}$ denotes the difference of two independent copies of $A_{i,j}$). Then, for all $\epsilon \in [0,1)$,
$$\Pr\left(s_n(A_n)\leq \frac{\epsilon}{\sqrt{n}}\right) \leq C\left( \epsilon + \exp(-c\epsilon^{2}n)
\right),$$
where $C,c$ depend only on $K$ and $b$.
\end{theorem}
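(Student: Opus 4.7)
The plan is to follow the standard Rudelson--Vershynin invertibility framework, as adapted to the real inhomogeneous setting by Livshyts and Livshyts--Tikhomirov--Vershynin, and port each step to the complex setting. I would begin by decomposing the complex sphere $\mathbb{S}_{\mathbb{C}}^{n-1}$ into compressible vectors $\mathrm{Comp}(\delta,\rho)$ (those within $\ell_2$-distance $\rho$ of a $\delta n$-sparse unit vector) and incompressible vectors $\mathrm{Incomp}(\delta,\rho)$, and handle the two classes separately. The second-moment hypothesis $\sum_{i,j}\mathbb{E}|A_{i,j}|^2 \leq Kn^2$, combined with a standard Latala- or Bandeira--van Handel-style bound, will yield an operator-norm estimate $\|A_n\|\leq C\sqrt{n}$ with very high probability; this is what lets the $\varepsilon$-net arguments below go through even without identical distributions.

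For the compressible case, I would use tensorization: the hypothesis $\mathbb{P}(b\leq|\widetilde{A_{i,j}}|\leq b^{-1})\geq b$ gives a uniform Levy concentration estimate $\sup_{z\in\mathbb{C}}\mathbb{P}(|A_{i,j}-z|\leq cb)\leq 1-c'$ for each entry (after a standard symmetrization argument). Applied row-by-row to a fixed $x\in\mathbb{S}_{\mathbb{C}}^{n-1}$, this gives $\mathbb{P}(\|A_n x\|_2\leq c_1\sqrt{n})\leq \exp(-c_2 n)$. A net argument on the sparse unit sphere, using the operator-norm bound to control approximation error, then shows $\inf_{x\in\mathrm{Comp}}\|A_n x\|_2 \geq c\sqrt{n}$ with probability at least $1-\exp(-c n)$, absorbing the $\exp(-c\varepsilon^2 n)$ term in the conclusion.

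For the incompressible case, I would invoke the invertibility-via-distance lemma to reduce
\[
\mathbb{P}\!\left(\inf_{x\in\mathrm{Incomp}}\|A_n x\|_2\leq \frac{\varepsilon}{\sqrt{n}}\right) \leq \frac{C}{n}\sum_{k=1}^{n}\mathbb{P}\bigl(\mathrm{dist}(X_k,H_k)\leq C'\varepsilon\bigr),
\]
where $X_k$ is the $k$-th column and $H_k$ is the span of the others. Writing $\mathrm{dist}(X_k,H_k)=|\langle X_k,\nu_k\rangle|$ for a random unit normal $\nu_k$ to $H_k$ (which is independent of $X_k$), I would condition on $\nu_k$ and bound $\mathbb{P}(|\langle X_k,\nu_k\rangle|\leq C'\varepsilon)$ using a complex small-ball / Levy concentration inequality controlled by an appropriate arithmetic structure of $\nu_k$ (the complex regularized LCD, i.e.\ the \textrm{CRLCD} of the notation). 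The structural theorem, proved separately, should show that with high probability the random normal $\nu_k$ is incompressible and has $\mathrm{CRLCD}(\nu_k)\geq \exp(cn)$, so that the small-ball probability is $O(\varepsilon + \exp(-cn))$; averaging over $k$ gives the stated bound.

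The main obstacle, and the essential novelty of the argument, is the last step: establishing an anti-concentration inequality for $\sum_j a_j \xi_j$, where $\xi_j$ are independent (not identically distributed) complex random variables satisfying only the uniform symmetrized-magnitude hypothesis, and $a_j\in\mathbb{C}$. In the real case the proof goes through Esseen's inequality and a careful integration of the characteristic function against a Gaussian, with the LCD arising to lower-bound the distance of $(a_j t)$ from $\mathbb{Z}^n$; in the complex case one must work with the two-dimensional characteristic function, define an appropriate complex-valued LCD capturing joint rational approximability of $(\mathrm{Re}\,a_j,\mathrm{Im}\,a_j)$, and prove the analogous integral bound. Once that inequality is in place, the sphere-decomposition machinery of Livshyts--Tikhomirov--Vershynin carries over essentially verbatim, and combining the two regimes yields the theorem.
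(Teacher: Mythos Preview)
Your overall architecture---compressible/incompressible split, tensorization for a single vector, invertibility via distance, and a complex small-ball inequality driven by a CRLCD---matches the paper. But there is a genuine gap in the compressible step, and a related overreach in the incompressible step.

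The claim that ``a standard Latala- or Bandeira--van Handel-style bound will yield $\|A_n\|\le C\sqrt{n}$ with very high probability'' is false under the stated hypotheses. The only moment assumption is $\sum_{i,j}\mathbb{E}|A_{i,j}|^{2}\le Kn^{2}$; the entries are allowed to be non-centered, non-identically distributed, and to have no finite fourth moment. Latala's inequality needs fourth moments (and control on individual row/column variance sums), and Bandeira--van Handel needs boundedness or sub-Gaussian tails. Already for i.i.d.\ centered entries with finite second moment but no fourth moment, $\|A_n\|/\sqrt{n}\to\infty$ in probability, so there is no hope of the operator-norm bound you invoke. Without it, the standard $\varepsilon$-net argument on sparse vectors collapses: you cannot pass from the net to the whole compressible set. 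The paper avoids this entirely by using Livshyts's deterministic net (Corollary~4 of \cite{livshyts2018smallest}, stated here as \cref{thm:net}), whose approximation error is controlled by $\mathbb{E}\|A\|_{\HS}^{2}$ rather than by $\|A\|$; this is exactly what the Hilbert--Schmidt hypothesis is designed for.

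Relatedly, in the incompressible step you assert that the random normal $\nu_k$ has $\mathrm{CRLCD}(\nu_k)\ge \exp(cn)$. The paper does not prove this; it only uses the elementary fact (\cref{lemma:incomp-large-CRLCD}) that an incompressible vector has $\mathrm{CRLCD}\gtrsim \sqrt{n}$, which after \cref{prop:anticonc-LCD} yields a small-ball bound of order $\epsilon+\exp(-c\epsilon^{2}n)$---note the $\epsilon^{2}n$ in the exponent, matching the theorem, not the $\exp(-cn)$ you wrote. Proving the exponential CRLCD bound would require porting the full level-set/net machinery of Livshyts--Tikhomirov--Vershynin to the complex setting, and that machinery again relies on the Hilbert--Schmidt net, not on an operator-norm bound.
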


In particular, \cref{thm:lsv-bound} implies that for any fixed $\delta>0$, with probability at least $1-\delta$, $s_n(A_n) \geq \Omega_{\delta}(n^{-1/2})$. The rate $n^{-1/2}$ is optimal, and to the best of our knowledge, all previous works considering general heavy-tailed complex random matrices miss this sharp rate. For instance, it was shown by Tao and Vu \cite{tao2010smooth} that if the entries of  $A_{i,j}$ are dominated (in a technical Fourier sense) by a complex random variable with $\kappa$-controlled second moment (see \cite{tao2010smooth} for definitions), then for any $C, \alpha > 0$,
\begin{equation}
\label{eqn:TV-bound}
\Pr(s_n(A_n) \leq n^{-C}\cdot n^{-1/2}) \lesssim_{C,\alpha} n^{-C+\alpha + o_n(1)} + \Pr(\|A_{n}\| \geq n^{1/2}).
\end{equation}
The technical Fourier-domination condition needed for the above result already implies that $\Pr( b^{-1} \geq |\widetilde{A_{i,j}}| \geq b) \geq b$ for some $b\in (0,1)$. Moreover, in order for the term $\Pr\left(\|A_{n}\| \geq n^{1/2}\right)$ to be bounded away from $1$, one needs to further assume more restrictive assumptions on the second moments than in \cref{thm:lsv-bound}, along with the assumption that $\sum_{i,j = 1}^{n}\E|A_{i,j}|^{4} \leq Kn^{2}$ for some $K > 0$.

For the case when the $A_{i,j}$ are i.i.d., $\Pr\left( |\widetilde{A_{i,j}}|\geq b\right) \geq b$ for some $b\in (0,1)$, and $\E|A_{i,j}|^{2} \leq K$ for some $K > 0$, the first author showed \cite{jain2019strong} that for any $\epsilon, \alpha > 0$,
\begin{equation}
    \Pr\left(s_n(A_n) \leq \epsilon n^{-1/2-\alpha}\right) \lesssim _{\alpha} \epsilon + \exp(-cn^{1/50}),
\end{equation}
which again misses the correct rate.\\ 

This somewhat dire situation in the general complex case should be contrasted with the real case, where much more is known. The early breakthrough of Rudelson \cite{rudelson2008invertibility} established that for an $n\times n$ matrix $A_{n}$ whose entries are i.i.d. copies of a real centered sub-Gaussian random variable, for any $\delta >0$, $s_{n}(A_{n}) = \Omega_{\delta}(n^{-1/2})$ with probability at least $1-\delta$. A subsequent breakthrough of Rudelson and Vershynin \cite{rudelson2008littlewood} refined this to the near-optimal tail bound
\begin{equation}
\label{eqn:RV}
\Pr(s_n(A_n)\leq \epsilon\cdot n^{-1/2})\lesssim \epsilon + \exp(-cn).
\end{equation}
Extensions of the above tail bound to heavy-tailed and inhomogeneous matrices has attracted much attention in recent years. Rebrova and Tikhomirov \cite{rebrova2018coverings} extended Rudelson and Vershynin's result to the case when the sub-Gaussian assumption is replaced by the finiteness of the second moment (the entries are still assumed to be identically distributed and centered). Livshyts \cite{livshyts2018smallest} showed that if the entries $A_{i,j}$ are independent \emph{real} variables, $\Pr\left(|\widetilde{A_{i,j}}| \geq b\right) \geq b$ for some $b\in (0,1)$, and $\sum_{i,j=1}^{n} \E |A_{i,j}|^{2} \leq Kn^{2}$ for some $K > 0$, then 
\begin{equation}
    \Pr\left(s_{n}(A_n) \leq \epsilon \cdot n^{-1/2}\right) \lesssim \epsilon + n^{-1/2}.
\end{equation}
Finally, Livshyts, Rudelson, and Tikhomirov \cite{livshyts2019smallest} obtained the near-optimal tail estimate \cref{eqn:RV} under these assumptions. \\ 

Perhaps unsurprisingly, our proof makes use of tools introduced in \cite{livshyts2018smallest, livshyts2019smallest}. The key new ingredient is an anti-concentration inequality for sums of independent complex random variables, which we will discuss in \cref{sec:anti-concentration}. \\

\subsection{Universality of ESDs of dense, inhomogeneous random matrices} 
The \emph{empirical spectral distribution} (ESD) $\mu_{n}$ of an $n\times n$ complex matrix $A_n$ is defined on $\R^{2}$ by the expression
$$\mu_{n}(s,t) := \frac{1}{n}\cdot \left|\{k \in [n] \mid \Re(\lambda_k) \leq s; \Im(\lambda_k) \leq t\}\right|,$$
where $\lambda_1,\dots,\lambda_n$ denote the eigenvalues of $A_n$. A major highlight of random matrix theory is the celebrated \emph{circular law} of Tao and Vu \cite{tao2010random}, which asserts that for $\emph{any}$ fixed complex random variable $x$ of mean $0$ and variance $1$, the ESD of $A_{n}/\sqrt{n}$ -- where $A_{n}$ is an $n\times n$ random matrix each of whose entries is an independent copy of $x$ -- converges uniformly to the distribution of the uniform measure on the unit disc in the complex plane,
$$\mu_{\infty}(s,t):= \frac{1}{\pi}\text{area}\{x\in \C \mid |x| \leq 1, \Re(x) \leq s, \Im(x) \leq t\}$$
as $n$ tends to infinity. More generally, Tao and Vu showed that for any fixed complex random variables $x$ and $y$ of mean $0$ and variance $1$, and for any sequence of deterministic matrices $M_{n}$ satisfying $\|M_{n}\|_{\HS}^{2} = O(n^{2})$, the ESDs of $(M_{n} + X_{n})/\sqrt{n}$ and $(M_{n} + Y_{n})/\sqrt{n}$ convergence in probability to $0$, where $X_{n}$ is an $n\times n$ random matrix whose entries are i.i.d. copies of $x$, and $Y_{n}$ is an $n\times n$ random matrix whose entries are i.i.d. copies of $y$. These results were extended by Krishnapur \cite{tao2010random} to independent, but not necessarily identically distributed matrices, satisfying certain restrictions on the distributions of the entries.  

Here, by using the arguments of Tao, Vu, and Krishnapur in conjunction with \cref{thm:lsv-bound}, we show the following.

\begin{theorem}
\label{thm:universality-esd}
Let $M_{n} = (\mu_{i,j}^{(n)})_{i,j \leq n}$ and $C_{n} = (\sigma_{i,j}^{(n)})_{i,j\leq n}$ be constant (i.e. deterministic) matrices satisfying
\begin{enumerate}[(i)]
    \item $\sup_{n} n^{-2}\|M_{n}\|_{\HS}^{2} < \infty$;
    \item $\alpha \leq \sigma_{i,j}^{(n)}\leq \beta$ for all $n,i,j,$ for some $0 < \alpha < \beta < \infty$.
\end{enumerate}
Given a matrix $\boldsymbol{X} = (x_{i,j})_{i,j \leq n}$, set
$$A_{n}(\boldsymbol{X}) = M_{n} + C_{n}\cdot \boldsymbol{X} = (\mu_{i,j}^{(n)} + \sigma_{i,j}^{(n)}x_{i,j})_{i,j\leq n},$$
where "$\cdot$" denotes the Hadamard product. 
\begin{enumerate}
\item Suppose that $x_{i,j}^{(n)}$ are independent complex-valued random variables with $\E[x_{i,j}^{(n)}] = 0$ and $\E|x_{i,j}^{(n)}|^{2} = 1$, and that $y_{i,j}^{(n)}$ are independent complex-valued random variables, also having zero mean and unit variance. 

\item Assume that there exists some $b \in (0,1)$ such that $\Pr\left( |\widetilde{x_{i,j}}|\geq b\right)\geq b$ and similarly for $y_{i,j}$.

\item Also, assume Pastur's condition,
$$\frac{1}{n^{2}} \sum_{i,j=1}^{n} \E\left[|x_{i,j}^{(n)}|^{2}\boldsymbol{I}\{|x_{i,j}^{(n)}| \geq \epsilon \sqrt{n}\}|\right] \to 0 \quad \text{for all }\epsilon>0,$$
and the same for $\boldsymbol{Y}$ in place of $\boldsymbol{X}$. 
\end{enumerate}
Then,
$$\mu_{n^{-1/2}\cdot A_{n}(\boldsymbol{X})} - \mu_{n^{-1/2}\cdot A_{n}(\boldsymbol{Y})} \to 0$$
in the sense of probability. 
\end{theorem}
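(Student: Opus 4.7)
The plan is to follow the Girko Hermitization scheme of Tao--Vu and Krishnapur, using \cref{thm:lsv-bound} as the crucial new input that allows their arguments to be carried out in the inhomogeneous complex setting.

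\textbf{Step 1 (reduction via the logarithmic potential).} By Girko's formula, for any smooth compactly supported $\varphi:\R^{2}\to\R$,
\[
\int \varphi \, d\mu_{n^{-1/2}A_n(\boldsymbol{X})} = \frac{1}{4\pi n}\int \Delta\varphi(z)\cdot \log\bigl|\det(n^{-1/2}A_n(\boldsymbol{X}) - zI)\bigr|^{2}\, d^{2}z,
\]
and $\log|\det(\cdot)| = \sum_{k}\log s_k(\cdot)$. By dominated convergence, it suffices to prove that for Lebesgue-almost every $z\in\C$,
\[
\frac{1}{n}\sum_{k=1}^{n}\log s_k(n^{-1/2}A_n(\boldsymbol{X}) - zI) - \frac{1}{n}\sum_{k=1}^{n}\log s_k(n^{-1/2}A_n(\boldsymbol{Y}) - zI) \longrightarrow 0
\]
in probability, together with a uniform integrability bound on the $z$-tails of the log-potential.

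\textbf{Step 2 (bulk convergence of singular values).} I apply the standard Stieltjes transform / Schur complement method to the $2n\times 2n$ Hermitization of $n^{-1/2}A_n - zI$. Pastur's condition (3), combined with the matching of the first two moments of $x_{i,j}$ and $y_{i,j}$, enables the Lindeberg swapping argument of Tao--Vu and Krishnapur: the Stieltjes transforms of the Hermitizations of $n^{-1/2}A_n(\boldsymbol{X}) - zI$ and $n^{-1/2}A_n(\boldsymbol{Y}) - zI$ both converge in probability to the same deterministic $m_z$ satisfying the usual fixed-point equation. This yields bulk convergence of the singular value distributions to a common limit $\nu_z$.

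\textbf{Step 3 (control of small singular values).} I apply \cref{thm:lsv-bound} to $A_n(\boldsymbol{X}) - \sqrt{n}zI$. Its symmetrized entries are $\sigma_{i,j}\widetilde{x_{i,j}}$; combining hypothesis~(2) with Chebyshev's inequality (using $\E|\widetilde{x_{i,j}}|^{2} = 2$) yields a two-sided anti-concentration constant, while (i)--(ii) give $\|M_n - \sqrt{n}zI\|_{\HS}^{2} + \|C_n\|_{\HS}^{2} = O(n^{2})$ for any fixed $z$. Thus
\[
\Pr\left(s_n(n^{-1/2}A_n(\boldsymbol{X}) - zI) \le \epsilon/n\right) \lesssim \epsilon + \exp(-c\epsilon^{2}n).
\]
For intermediate singular values, a distance-to-subspace argument in the spirit of Tao--Vu (using the complex small-ball estimates of \cref{sec:anti-concentration}) gives $s_{n-k}(n^{-1/2}A_n(\boldsymbol{X}) - zI) \gtrsim k/n$ for $1\le k\le n/2$ with high probability. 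A direct estimate then shows that $\int_{0}^{\delta}\log s\, d\nu_{n,z}^{X}(s)\to 0$ in probability as $\delta\to 0$, uniformly in $n$, and similarly for $\boldsymbol{Y}$, providing the required uniform integrability. Combining with Step~2 then implies Step~1 and completes the proof.

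The main obstacle will be the intermediate singular-value estimate: \cref{thm:lsv-bound} controls only $s_n$, whereas the Girko scheme requires $s_{n-k}\gtrsim k/n$ over a broad range of $k$. Adapting the distance-to-subspace argument to the inhomogeneous complex setting requires the anti-concentration inequalities of \cref{sec:anti-concentration}, together with uniform control as $z$ ranges over compact subsets of $\C$, so that the log-potential bounds can be integrated against $\Delta\varphi$ in Step~1.
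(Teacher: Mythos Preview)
Your overall strategy---Girko/Hermitization, bulk universality via Lindeberg swapping under Pastur's condition, and \cref{thm:lsv-bound} for the smallest singular value---is exactly the paper's framework, and your verification that $A_n(\boldsymbol{X})-\sqrt{n}zI$ satisfies the hypotheses of \cref{thm:lsv-bound} matches the paper's. The difference lies in how the contribution of the small singular values to the log-potential is handled, and here the paper takes a shortcut that eliminates what you flag as the ``main obstacle.''

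Instead of writing $\log|\det| = \sum_k \log s_k$ and controlling the intermediate singular values $s_{n-k}\gtrsim k/n$, the paper uses the row-distance (base-times-height) decomposition $\log|\det| = \sum_i \log\dist(X_i,V_i)$, where $V_i = \operatorname{span}(X_1,\dots,X_{i-1})$. Steps 2--4 of Tao--Vu--Krishnapur (cited verbatim) already show that the first $n-n^{0.99}$ summands are universal; for the remaining $n^{0.99}$ summands one only needs the crude two-sided bound $n^{-C}\le \dist(X_i,V_i)\le n^{C}$, since $n^{-1}\cdot n^{0.99}\cdot O(\log n)\to 0$. The upper bound comes from $\|X_i\|\le s_1$ and Markov on the Hilbert--Schmidt norm; the lower bound comes from the negative-second-moment identity $\sum_j \dist(X_j,U_j)^{-2}=\sum_j s_j^{-2}$, which gives $\dist(X_i,V_i)\ge \dist(X_i,U_i)\ge n^{-1}s_n$, and a single application of \cref{thm:lsv-bound} with $\epsilon=n^{-1/4}$ finishes. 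Thus no estimate of the form $s_{n-k}\gtrsim k/n$ is ever needed, and the adaptation of the distance-to-subspace argument you anticipate is unnecessary.
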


\begin{remark}
In \cite{tao2010random}, Krishnapur proved a similar result, except that the natural mild anti-concentration assumption 2.\ was replaced by the stronger, technical, and hard-to-verify condition that $x_{i,j}, y_{i,j}$ are dominated in a Fourier sense by a complex random variable with $\kappa$-controlled second moment (see \cite{tao2010random} for relevant definitions).
\end{remark}

\noindent {\bf Notation: } Throughout the paper, we will omit floors and ceilings when they make no essential difference. We will use $\S_{\C}^{n-1}$ to denote the set of unit vectors in $\C^{n}$, $B(x,r)$ to denote the ball of radius $r$ centered at $x$, and $\Re(\boldsymbol{v}), \Im(\boldsymbol{v})$ to denote the real and imaginary parts of a complex vector $\boldsymbol{v}\in \C^{n}$. As is standard, we will use $[n]$ to denote the discrete interval $\{1,\dots,n\}$. We will also use the asymptotic notation $\lesssim, \gtrsim, \ll, \gg$ to denote $O(\cdot), \Omega(\cdot), o(\cdot), \omega(\cdot)$ respectively. For a matrix $M$, we will use $\|M\|$ to denote its standard $\ell^{2}\to \ell^{2}$ operator norm and $\|M\|_{\HS}$ to denote the Hilbert-Schmidt norm. All logarithms are natural unless noted otherwise and $\star$ denotes the Schur (entry-wise) product.\\

\section{Anti-concentration for sums of non-identically distributed independent complex random variables}\label{sec:anti-concentration}

The goal of the theory of anti-concentration is to obtain upper bounds on the L\'evy concentration function, which is defined as follows.  

\begin{definition}[L\'evy concentration function]Let $X:=(X_1,\dots,X_n) \in \C^{n}$ be a complex random vector, and let ${v}:=(v_{1},\dots,v_{n})\in\C^{n}$. We define the \emph{L\'evy concentration function of ${v}$ at radius $r$ with respect to $X$} by $$\rho_{r,X}({v}):=\sup_{x\in\C}\Pr\left(v_{1}X_{1}+\dots+v_{n}X_{n}\in B(x,r)\right).$$
\end{definition}

Rudelson and Vershynin \cite{rudelson2008littlewood} introduced the notion of the essential least common denominator (LCD) to control the L\'evy concentration function. This notion was generalized in \cite{livshyts2019smallest} to the randomized least common denominator (RLCD) and used to handle non-i.i.d. real random variables. We give a generalization of this to non-i.i.d. complex random variables which will be useful for us.\\ 

\begin{definition}[CRLCD]
For a complex random vector ${X}:=(X_1,\dots,X_n) \in \C^{n}$, a deterministic vector ${{v}}:=(v_{1},\dots,v_{n})\in\C^{n}$, and parameters $L > 0, u\in (0,1)$, define 
$$\CRLCD_{L,u}^{X}({v}):=\inf_{\theta\in\C}\left\{|\theta|>0:\E[\dist^{2}(\theta {{v}}\star\widetilde{X},(\Z+i\Z)^{n})]<\min(u|\theta|^{2}\|{{v}}\|_{2}^{2},L^{2})\right\},$$
where $\widetilde{X}$ denotes the symmetrization of $X$ (i.e. $\widetilde{X}\sim X'-X''$, where $X'$ and $X''$ are independent copies of $X$). 
\end{definition}

Before proceeding to the results of this section, we need a couple of additional definitions. 

\begin{definition}[\cite{tao2008random}]
For a complex random vector ${X}:= (X_1,\dots,X_n) \in \C^{n}$ and a deterministic vector ${{v}}:= (v_1,\dots,v_n) \in \C^{n}$, let
$$P_{{X}}({{v}}):= \E\left[-\pi|\langle \widehat{X} , v\rangle|^{2}\right].$$
Here, $\widehat{X}:= \widetilde{X}\star (x_1,\dots,x_n)$, where $x_1,\dots,x_n$ are mutually independent $\Ber(1/2)$ random variables, which are also independent of $\widetilde{X}$.
\end{definition}

\begin{definition}[\cite{tao2008random}]
For a complex random variable $z \in \C$ and a fixed complex number $a \in \C$, let
$$\|a\|_{z}:= \left(\E\left[\|\Re(a\cdot \widetilde{z})\|_{\R / \Z}^{2}\right]\right)^{1/2},$$
where $\widetilde{z}$ denotes the symmetrization of $z$, and $\|\cdot \|_{\R / \Z}$ denotes the distance to the nearest integer. 
\end{definition}

\begin{lemma}[\cite{tao2008random}]
\label{lemma:tv-anticoncentration}
For a complex random vector $X:= (X_1,\dots,X_n)\in \C^{n}$ with independent coordinates, and deterministic vectors ${v}:= (v_1,\dots,v_n), {w}:= (w_1,\dots,w_n) \in \C^{n}$:
\begin{enumerate}
    \item $\rho_{r,X}(v) \leq \exp(\pi r^{2})\cdot P_X(v)$.
    \item $P_X(v)P_X(w) \leq 2P_{XX}(vw)$. Here, $vw \in \C^{2n}$ denotes the vector whose first $n$ coordinates coincide with $v$ and last $n$ coordinates coincide with $w$, and $XX \in \C^{2n}$ denotes the complex random vector whose first $n$ coordinates and last $n$ coordinates are both independent copies of $X$. 
    \item $P_X(v) \leq \int_{\C}\exp\left(-\sum_{i=1}^{n}\|\xi\cdot v_i \|^{2}_{X_i}/2\right)\exp(-\pi |\xi|^{2})d\xi$.
\end{enumerate}
\end{lemma}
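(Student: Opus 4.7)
The plan is to treat each of the three parts of the lemma as a separate short Fourier-analytic computation, using the key fact that the two-dimensional Gaussian $z\mapsto \exp(-\pi|z|^{2})$ is its own Fourier transform on $\C$ (viewed as $\R^{2}$).

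For part (1), I would smooth the distribution of $S:=\langle X, v\rangle$ by convolving with the Gaussian kernel $g_{r}(z):=r^{-2}\exp(-\pi|z|^{2}/r^{2})$ and observe that any ball probability $\Pr(S\in B(x,r))$ is bounded by a constant times the smoothed density of $S$ at $x$, which in turn is uniformly bounded via Plancherel by $\int_{\C}|\phi_{S}(\xi)|\,\exp(-\pi r^{2}|\xi|^{2})\,d\xi$, where $\phi_{S}$ is the characteristic function of $S$. Symmetrizing ($|\phi_{S}|^{2}=\phi_{\widetilde{S}}$) and then inserting the random Bernoulli signs $(x_{i})$ (whose effect on a real-valued symmetric function is trivial but which decouple real and imaginary parts cleanly when we invert the Fourier transform) turns this integral back into $\E[\exp(-\pi|\langle \widehat{X}, v\rangle|^{2})]$ up to the factor $\exp(\pi r^{2})$. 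This yields $\rho_{r,X}(v)\le \exp(\pi r^{2})\,P_{X}(v)$.

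For part (2), the natural move is the parallelogram identity together with Cauchy--Schwarz. Writing $A:=\langle \widehat{X}, v\rangle$ and $B:=\langle \widehat{X}', w\rangle$ for independent copies $\widehat{X},\widehat{X}'$, the product of the expectations factors as
\[
P_{X}(v)\,P_{X}(w)
=\E\exp\!\bigl(-\tfrac{\pi}{2}|A+B|^{2}-\tfrac{\pi}{2}|A-B|^{2}\bigr).
\]
Cauchy--Schwarz bounds this by $\bigl(\E\exp(-\pi|A+B|^{2})\bigr)^{1/2}\bigl(\E\exp(-\pi|A-B|^{2})\bigr)^{1/2}$, and the symmetry of $\widehat{X}$ (built into the definition via the Bernoulli signs) identifies both integrals with $P_{XX}(vw)$. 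A slight loss in the constant, or from matching the Bernoulli signs on the doubled space $\C^{2n}$, accounts for the factor $2$.

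For part (3), I would apply Fourier inversion of the Gaussian, writing $\exp(-\pi|z|^{2})=\int_{\C}e^{2\pi i\,\Re(\bar z\xi)}\exp(-\pi|\xi|^{2})\,d\xi$. Substituting $z=\langle \widehat{X}, v\rangle$ and exchanging integration with the product expectation yields $P_{X}(v)=\int_{\C}\prod_{i=1}^{n}\E\,e^{2\pi i\,x_{i}\,\Re(\xi v_{i}\widetilde{X}_{i})}\,\exp(-\pi|\xi|^{2})\,d\xi$. The Bernoulli average over $x_{i}$ collapses each factor to $\E\cos\!\bigl(2\pi\,\Re(\xi v_{i}\widetilde{X}_{i})\bigr)$, and the elementary inequality $|\cos(2\pi t)|\le \exp(-c\,\|t\|_{\R/\Z}^{2})$ (with $c$ chosen so that $c/2$ matches the stated exponent) converts the product into $\exp\!\bigl(-\tfrac{1}{2}\sum_{i}\|\xi v_{i}\|_{X_{i}}^{2}\bigr)$ after applying $|\E Y|\le \E|Y|$.

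The only genuinely delicate step is part (1): the concentration function is a supremum over translates $x\in\C$, while $P_{X}(v)$ involves only the symmetrization, so one must be careful that the smoothing--Plancherel--symmetrization chain produces a translation-free upper bound with no implicit dependence on $x$. The Bernoulli signs in the definition of $\widehat{X}$ are precisely what make parts (2) and (3) go through cleanly afterward, so the main obstacle is organising the Fourier computation in part (1) with exactly those signs in place from the outset.
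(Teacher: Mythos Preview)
The paper does not re-prove these facts: it simply cites Lemmas~4.3, 4.5(iii) and 5.2 of Tao--Vu and observes that the i.i.d.\ assumption there is never used. Your sketches are essentially those Tao--Vu arguments. Part~(1) is fine; your worry about the supremum over $x$ evaporates because after Fourier inversion the translate $x$ contributes only a unimodular factor $e^{-2\pi i\,\Re(\bar\xi x)}$, which is killed when you pass to $|\phi_S(\xi)|$. For part~(2) your parallelogram/Cauchy--Schwarz argument in fact yields the inequality with constant $1$ rather than $2$, since $-B\sim B$ makes both Cauchy--Schwarz factors equal to $P_{XX}(vw)$.

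There is, however, a real slip in part~(3). The inequality $|\cos(2\pi t)|\le \exp(-c\,\|t\|_{\R/\Z}^{2})$ you invoke is false: at $t=1/2$ the left side equals $1$ while the right side is $e^{-c/4}<1$ for any $c>0$. The source of the error is that you are treating the $x_i$ as Rademacher signs, whereas here $\Ber(1/2)$ means $0/1$. With $x_i\in\{0,1\}$ the average of $e^{2\pi i x_i t}$ is $(1+e^{2\pi i t})/2$, so after taking the expectation over the symmetric $\widetilde{X}_i$ each factor becomes
$\tfrac{1}{2}\bigl(1+\E\cos(2\pi\,\Re(\xi v_i\widetilde X_i))\bigr)=\E\cos^{2}\!\bigl(\pi\,\Re(\xi v_i\widetilde X_i)\bigr)$.
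Now the pointwise bound $\cos^{2}(\pi t)\le 1-4\|t\|_{\R/\Z}^{2}$ (from $\sin(\pi s)\ge 2s$ on $[0,1/2]$) gives, by linearity of expectation and $1-x\le e^{-x}$,
\[
\E\cos^{2}\!\bigl(\pi\,\Re(\xi v_i\widetilde X_i)\bigr)\ \le\ 1-4\,\|\xi v_i\|_{X_i}^{2}\ \le\ \exp\!\bigl(-\tfrac{1}{2}\|\xi v_i\|_{X_i}^{2}\bigr),
\]
and multiplying over $i$ yields the claimed integrand. Note that taking the \emph{linear} bound before exponentiating also sidesteps the Jensen-direction problem your version would hit when trying to pass from $\prod_i\E e^{-c\|T_i\|^2}$ to $e^{-c\sum_i\E\|T_i\|^2}$.
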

\begin{proof}
1. follows from Lemma 4.3, 2. follows from Lemma 4.5 (iii), and 3. follows from Lemma 5.2 in \cite{tao2008random}. Actually, in \cite{tao2008random}, these results are stated only in the case when the coordinates of the random vector $X$ are identically distributed, but exactly the same proof also works for our more general setting.  
\end{proof}

Next, we need a small modification of the initial steps in the proof of Theorem 2.11 in \cite{jain2019strong}.
\begin{lemma}
\label{lemma:doubling}
Let $X := (X_1,\dots,X_n) \in \C^{n}$ be a complex random vector with independent coordinates, and let $w:= (w_1,\dots,w_n) \in \C^{n}$ be a deterministic vector. Then,
$$\rho_{r,X}(w)^{2} \leq 2\exp(2\pi r^{2})\cdot \int_{\C} \exp\left(-\frac{1}{2}\E\left[\dist^{2}\left(\xi w \star \widetilde{X}, (\Z+i\Z)^{n}\right)\right]\right)\exp(-\pi |\xi|^{2})d\xi.$$
\end{lemma}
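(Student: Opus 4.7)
The plan is to chain the three parts of \cref{lemma:tv-anticoncentration} together, using a complex-analytic doubling trick which replaces the single-vector object $P_X(w)$ with a $2n$-dimensional object $P_{XX}(w,iw)$ that captures both the real and the imaginary part of the distance to the Gaussian integer lattice. Concretely, part 1 gives $\rho_{r,X}(w)^2 \leq \exp(2\pi r^2)\, P_X(w)^2$; part 2 will convert $P_X(w)^2$ into $P_{XX}$ of a $2n$-vector; and part 3 will upgrade the latter into the desired Gaussian integral.

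The crucial observation is that $\langle \widehat{X}, iw\rangle = -i \langle \widehat{X}, w\rangle$, so $|\langle \widehat{X}, iw\rangle| = |\langle \widehat{X}, w\rangle|$ and therefore $P_X(iw) = P_X(w)$. This allows me to write $P_X(w)^2 = P_X(w)\, P_X(iw)$ and apply part 2 of \cref{lemma:tv-anticoncentration} with the vectors $v = w$ and (the role of) $w = iw$, yielding
\[
P_X(w)^2 \;\leq\; 2\, P_{XX}(w,iw),
\]
where $(w,iw) \in \C^{2n}$ has $w$ in its first $n$ coordinates and $iw$ in its last $n$. Applying part 3 of \cref{lemma:tv-anticoncentration} to the right-hand side, and using that the second $n$ coordinates of $XX$ have the same marginal distributions as the first (so the $\|\cdot\|$-norms coincide), gives
\[
P_{XX}(w,iw) \;\leq\; \int_{\C} \exp\!\left(-\tfrac{1}{2}\sum_{j=1}^{n}\big(\|\xi w_j\|_{X_j}^{2}+\|i\xi w_j\|_{X_j}^{2}\big)\right)\exp(-\pi |\xi|^{2})\, d\xi.
\]

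Finally, I would verify the pointwise identity $\|a\|_z^{2}+\|ia\|_z^{2}=\E[\dist^{2}(a\widetilde{z},\Z+i\Z)]$ for any $a\in\C$ and complex random variable $z$. This follows because $\|a\|_z^{2}=\E[\|\Re(a\widetilde{z})\|_{\R/\Z}^{2}]$ and $\|ia\|_z^{2}=\E[\|\Re(ia\widetilde{z})\|_{\R/\Z}^{2}]=\E[\|\Im(a\widetilde{z})\|_{\R/\Z}^{2}]$, together with the fact that the squared distance from a complex number to the Gaussian integer lattice splits as the sum of the squared distances of its real and imaginary parts to $\Z$. Summing over $j$ and invoking the independence of the coordinates of $\widetilde{X}$, the exponent in the integrand becomes $\tfrac{1}{2}\E[\dist^{2}(\xi w\star\widetilde{X},(\Z+i\Z)^{n})]$, yielding exactly the claimed bound. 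The only nontrivial creative step is choosing the doubling $w\mapsto(w,iw)$; once that choice is made, the rest is essentially a line-by-line bookkeeping of Lemma 2.2, so the main obstacle is just identifying the right doubling rather than any hard analytic estimate.
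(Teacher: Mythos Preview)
Your proposal is correct and follows essentially the same route as the paper: the paper also doubles via $w\mapsto (w,iw)$, chains parts 1, 2, 3 of \cref{lemma:tv-anticoncentration}, and then rewrites the exponent using the identity $\|\xi w_j\|_{X_j}^{2}+\|i\xi w_j\|_{X_j}^{2}=\E[\dist^{2}(\xi w_j\widetilde{X}_j,\Z+i\Z)]$. The only cosmetic difference is that the paper uses $\rho_{r,X}(w)=\rho_{r,X}(iw)$ before applying part 1, whereas you apply part 1 first and then use $P_X(w)=P_X(iw)$; these are equivalent (and note that the final summation identity needs only linearity of expectation, not independence of the $\widetilde{X}_j$).
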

\begin{proof}
Let $w_{\C} \in \C^{2n}$ denote the vector whose first $n$ coordinates coincide with $w$ and last $n$ coordinates coincide with $i\cdot w$. Then, since $\rho_{r,X}(w) = \rho_{r,X}(i\cdot w)$, we have
\begin{align*}
    \rho_{r,X}(w)^{2} 
    &= \rho_{r,X}(w)\rho_{r,X}(i\cdot w)\\
    &\leq \exp(2\pi r^{2})\cdot P_{X}(w)\cdot P_{X}(i \cdot w)\\
    &\leq 2\exp(2\pi r^{2})\cdot P_{XX}(w_{\C})\\
    &\leq 2\exp(2\pi r^{2})\cdot \int_{\C} \exp\left(-\frac{1}{2}\sum_{i=1}^{n}\left(\|\xi \cdot w_i\|_{X_i}^{2} + \|i\xi\cdot w_i\|_{X_i}^{2}\right)\right)\exp(-\pi |\xi|^{2})d\xi,
\end{align*}
  where the second, third and fourth inequalities follow from \cref{lemma:tv-anticoncentration} parts 1., 2., and 3. respectively. Finally, note that
  \begin{align*}
      \sum_{i=1}^{n}\left(\|\xi \cdot w_i\|_{X_i}^{2} + \|i\xi\cdot w_i\|_{X_i}^{2}\right)
      &= \E\sum_{i=1}^{n}\left(\|\Re(\xi w_i \cdot \widetilde{X_i}) \|^{2}_{\R / \Z} + \|\Re(i\xi w_i\cdot \widetilde{X_i}) \|^{2}_{\R / \Z}\right)\\
      &= \E\sum_{i=1}^{n}\left(\|\Re(\xi w_i \cdot \widetilde{X_i}) \|^{2}_{\R / \Z} + \|\Im(\xi w_i\cdot \widetilde{X_i}) \|^{2}_{\R / \Z}\right)\\
      &= \E\left[\dist^{2}(\xi w \star \widetilde{X}, (\Z + i\Z)^{n})\right]. 
      \qedhere
  \end{align*}
\end{proof}

The next proposition is the main result of this section. We note that the conclusion of this proposition can be considerably strengthened; however, the statement given below will be sufficient for our application, and has a much simpler proof.  
\begin{proposition}
\label{prop:anticonc-LCD}
Let $X:=(X_1,\dots,X_n)\in \C^{n}$ be a complex random vector with independent coordinates and let $v:=(v_1,\dots,v_n) \in \C^{n}$ be such that $\frac{1}{2}\leq \|v\|_{2} \leq 2$. Then, for all parameters $L>0, u\in (0,1)$, and for all $\epsilon > 0$,
$$\rho_{\epsilon,X}(v) \leq C_{\ref{prop:anticonc-LCD}}\left(\epsilon u^{-1/2} + \exp\left(-\frac{1}{4}L^{2}\right) + \exp\left(-\frac{\pi}{4}\epsilon^{2}\CRLCD_{L,u}^{X}(v)^{2}\right)\right),$$
where $C_{\ref{prop:anticonc-LCD}}$ is an absolute constant.   
\end{proposition}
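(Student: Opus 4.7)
The plan is to combine the scaling invariance of the L\'evy concentration function with the doubling inequality in \cref{lemma:doubling}, and then to control the resulting integral by splitting its domain according to the $\CRLCD$.

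The first step is to observe that $\rho_{\epsilon,X}(v)=\rho_{1,X}(v/\epsilon)$, obtained by replacing the disc of radius $\epsilon$ around a center $x$ with the disc of radius $1$ around $x/\epsilon$. Applying \cref{lemma:doubling} with $r=1$ and $w=v/\epsilon$ and then making the change of variables $\eta=\xi/\epsilon$ (which contributes a Jacobian factor of $\epsilon^{2}$ and turns the Gaussian damping into $e^{-\pi\epsilon^{2}|\eta|^{2}}$) yields
\[
\rho_{\epsilon,X}(v)^{2}\;\le\; 2e^{2\pi}\,\epsilon^{2}\int_{\C}\exp\!\Bigl(-\tfrac{1}{2}\E[\dist^{2}(\eta v\star\widetilde{X},(\Z+i\Z)^{n})]\Bigr)\,e^{-\pi\epsilon^{2}|\eta|^{2}}\,d\eta.
\]
It is precisely this scaling manoeuvre that produces the overall $\epsilon^{2}$ factor which, after taking a square root at the end, gives the linear-in-$\epsilon$ term in the statement.

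Let $D:=\CRLCD_{L,u}^{X}(v)$. The next step is to split the domain of integration into three regions: (i) $|\eta|<\min(D,L/(\sqrt{u}\|v\|_{2}))$, on which the definition of $\CRLCD$ together with $\|v\|_{2}\ge 1/2$ gives the subgaussian bound $\E[\dist^{2}]\ge u|\eta|^{2}\|v\|_{2}^{2}$; (ii) $L/(\sqrt{u}\|v\|_{2})\le|\eta|<D$, where the $\min$ in the $\CRLCD$ definition is realised by $L^{2}$; and (iii) $|\eta|\ge D$. On region (i) the integrand is dominated by $\exp(-u|\eta|^{2}\|v\|_{2}^{2}/2)$, whose integral over $\C$ is $O(1/u)$. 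On region (ii), $e^{-L^{2}/2}$ comes out of the integral and what remains integrates to at most $1/\epsilon^{2}$. For region (iii), the small trick is to write $e^{-\pi\epsilon^{2}|\eta|^{2}}=e^{-\pi\epsilon^{2}|\eta|^{2}/2}\cdot e^{-\pi\epsilon^{2}|\eta|^{2}/2}$, use $|\eta|\ge D$ on the first factor and integrate the second over all of $\C$; this produces an overall bound of order $e^{-\pi\epsilon^{2}D^{2}/2}/\epsilon^{2}$ and, crucially, matches the constant $\pi/4$ in the target exponent after the final square root.

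Plugging these three estimates back, the $\epsilon^{2}$ factor cancels the $1/\epsilon^{2}$ coming from regions (ii) and (iii), giving
\[
\rho_{\epsilon,X}(v)^{2}\;\lesssim\;\frac{\epsilon^{2}}{u}+e^{-L^{2}/2}+e^{-\pi\epsilon^{2}D^{2}/2},
\]
and the bound $\sqrt{a+b+c}\le\sqrt{a}+\sqrt{b}+\sqrt{c}$ yields the stated inequality. The proof is conceptually straightforward once \cref{lemma:doubling} is in hand; the only real subtlety is spotting the scaling reduction $\rho_{\epsilon,X}(v)=\rho_{1,X}(v/\epsilon)$, since applying the doubling inequality directly with $r=\epsilon$ would leave a right-hand side independent of $\epsilon$ and hence fail to give any decay as $\epsilon\to 0$.
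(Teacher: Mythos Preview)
Your proof is correct and follows essentially the same route as the paper: the scaling $\rho_{\epsilon,X}(v)=\rho_{1,X}(v/\epsilon)$, the doubling lemma, the change of variables $\xi=\epsilon\eta$, and the split of the integral according to $\CRLCD$ are exactly what the paper does. The only cosmetic difference is that the paper splits into two regions and handles the $\min(u|\eta|^{2}\|v\|_{2}^{2},L^{2})$ via the inequality $e^{-\min(a,b)/2}\le e^{-a/2}+e^{-b/2}$, whereas you split into three regions according to which branch of the $\min$ is active; the resulting bounds are identical.
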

\begin{proof}
  Let ${w}:={v}/\epsilon\in\C^{n}$. Then, $2^{-1}\epsilon^{-1}\leq\|w\|_{2}\leq2\epsilon^{-1}$
and $\rho_{\epsilon, X}(v) = \rho_{1,X}(w)$.  
Moreover,
\begin{align*}
    \rho_{1,X}(w)^{2} 
    &\leq 2\exp(2\pi)\cdot \int_{\C}\exp\left(-\frac{1}{2}\E\left[\dist^{2}\left(\xi w\star \widetilde{X}, (\Z + i\Z)^{n}\right)\right]\right)\exp(-\pi|\xi|^{2})d\xi\\
    &= 2\exp(2\pi)\epsilon^{2}\cdot \int_{\C}\exp\left(-\frac{1}{2}\E\left[\dist^{2}\left(\eta v\star \widetilde{X}, (\Z + i\Z)^{n}\right)\right]\right)\exp(-\pi\epsilon^{2}|\eta|^{2})d\eta
\end{align*}
where the first line follows from \cref{lemma:doubling} and the second line follows from the change of variables $\xi = \epsilon \eta$. 

Let $$F(\eta) = \exp\left(-\frac{1}{2}\E\left[\dist^{2}\left(\eta v\star \widetilde{X}, (\Z + i\Z)^{n}\right)\right]\right)\exp(-\pi\epsilon^{2}|\eta|^{2}).$$ We break the above integral into two regions, $B(0,\CRLCD^{X}_{L,u}(v))$ and $\C \setminus B(0,\CRLCD^{X}_{L,u}(v))$.


For the first region, note that by the definition of $\CRLCD$, 
\begin{align*}
\int_{B(0,\CRLCD_{L,u}^{X}(v))} F(\eta) d\eta & \leq\int_{B(0,\CRLCD_{L,u}^{X}(v))}\exp\left(-\frac{1}{2}\min\left(u|\eta|^{2}\|v\|_{2}^{2},L^{2}\right)-\pi\epsilon^{2}|\eta|^{2}\right)d\eta\\
& \leq\int_{\C}\exp\left(-\frac{1}{2}\min\left(u|\eta|^{2}\|v\|_{2}^{2},L^{2}\right)-\pi\epsilon^{2}|\eta|^{2}\right)d\eta\\
& \leq \int_{\C}\exp\left(-\frac{1}{2}u|\eta|^{2}\|v\|_{2}^{2} \right)d\eta + \int_{\C}\exp\left(-\frac{1}{2}L^{2} - \pi \epsilon^{2}|\eta|^{2}\right)d\eta \\
&\leq C\left(u^{-1} + \epsilon^{-2}\cdot \exp\left(-\frac{1}{2}L^{2}\right)\right),
\end{align*}
for some absolute constant $C > 0$. For the second region, note that 
\begin{align*}
\int_{\C\setminus B(0,\CRLCD_{L,u}^{X}(v))} F(\eta) d\eta & \leq\int_{\C\setminus B(0,\CRLCD_{L,u}^{X}(v))}\exp(-\pi\epsilon^{2}|\eta|^{2})d\eta\\
 & =\epsilon^{-2}\int_{\C\setminus B(0,\epsilon \CRLCD_{L,u}^{X}(v))}\exp(-\pi|\xi|^{2})d\xi\\
 & \leq C\epsilon^{-2}\exp\left(-\frac{\pi}{2}\epsilon^{2}\CRLCD_{L,u}^{X}(v)^{2}\right),
\end{align*}
for some absolute constant $C > 0$.
Putting everything together, we see that 
\begin{align*}
    \rho_{\epsilon,X}(v)^{2}\leq C\left(\epsilon^{2}u^{-1} + \exp\left(-\frac{1}{2}L^{2}\right) + \exp\left(-\frac{\pi}{2}\epsilon^{2}\CRLCD^{X}_{L,u}(v)^{2}\right) \right),
\end{align*}
so that
$$\rho_{\epsilon,X}(v) \leq C\left(\epsilon u^{-1/2} + \exp\left(-\frac{1}{4}L^{2}\right) + \exp\left(-\frac{\pi}{4}\epsilon^{2}\CRLCD_{L,u}^{X}(v)^{2}\right)\right),$$
as desired. 
\end{proof}

We conclude this section with the following lemma, which shows that weighted sums of random variables with finite non-zero variance are not too close to being a constant. 
\begin{lemma}
\label{lemma:anticoncentration}
Let $X:=(X_1,\dots,X_n) \in \C^{n}$ be a complex random vector with independent coordinates such that $\Pr\left(b^{-1}\geq |\widetilde{X}_i|\geq b\right) \geq b$ for some $b\in (0,1)$. There exists a constant $c_{\ref{lemma:anticoncentration}} \in (0,1)$ depending only on $b$ such that for all unit vectors $v := (v_1,\dots,v_n) \in \S_{\C}^{n-1}$, 
\begin{equation}\label{eq:anticoncentration}
\rho_{c_{\ref{lemma:anticoncentration}},X}(v) \leq 1-c_{\ref{lemma:anticoncentration}}.
\end{equation}

\end{lemma}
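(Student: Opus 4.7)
The plan is to reduce to an anti-concentration bound for the symmetrization $T := \sum_{i=1}^n v_i \widetilde{X}_i$ and then exploit the hidden Rademacher structure via Paley--Zygmund. Set $S := \sum v_i X_i$ and let $S_1,S_2$ be independent copies; for any $x \in \C$ the triangle inequality gives $\Pr(S \in B(x,c))^2 \leq \Pr(|S_1-S_2| \leq 2c)$, so taking the supremum over $x$ yields
\[
\rho_{c,X}(v)^2 \,\leq\, \Pr(|T| \leq 2c).
\]
It therefore suffices to produce some $c_0 = c_0(b) \in (0,1)$ with $\Pr(|T| \geq 2c_0) \geq 2c_0$, since then $\rho_{c_0,X}(v) \leq \sqrt{1 - 2c_0} \leq 1 - c_0$.

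Next, I would use that each $\widetilde{X}_i$ is symmetric to replace it in distribution by $\epsilon_i \widetilde{X}_i$, where $(\epsilon_i)$ are independent Rademacher signs independent of $\widetilde{X}$. Then $T$ has the same law as the Rademacher sum $R := \sum \epsilon_i v_i \widetilde{X}_i$, and, conditionally on $\widetilde{X}$, $R$ is a sum of independent symmetric terms with (frozen) complex coefficients $a_i := v_i \widetilde{X}_i$. A direct computation (using that $\E[\epsilon_i\epsilon_j\epsilon_k\epsilon_l]$ is $1$ when the indices pair up and $0$ otherwise) yields $\E[|R|^2\mid \widetilde{X}] = \|a\|_2^2$ and
\[
\E[|R|^4 \mid \widetilde{X}] \,=\, 2\|a\|_2^4 + \Big|\sum_i a_i^2\Big|^2 - 2\sum_i|a_i|^4 \,\leq\, 3\|a\|_2^4,
\]
so Paley--Zygmund gives $\Pr(|R| \geq \|a\|_2/\sqrt{2} \mid \widetilde{X}) \geq 1/12$.

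Finally, I would lower-bound $\|a\|_2$ probabilistically using the anti-concentration hypothesis. Let $\eta_i := \1\{|\widetilde{X}_i| \geq b\}$, so that $\E\eta_i \geq b$, and set $N := \sum |v_i|^2 \eta_i \in [0,1]$. Then $\E N \geq b\|v\|_2^2 = b$; the estimate $\E N \leq b/2 + \Pr(N \geq b/2)$ forces $\Pr(N \geq b/2) \geq b/2$, and since $\|a\|_2^2 \geq b^2 N$ this gives $\Pr(\|a\|_2 \geq b^{3/2}/\sqrt{2}) \geq b/2$. Combining this with the conditional Paley--Zygmund bound yields $\Pr(|T| \geq b^{3/2}/2) \geq b/24$, and choosing $c_{\ref{lemma:anticoncentration}} \leq \min(b/48,\, b^{3/2}/8)$ together with $\sqrt{1 - b/24} \leq 1 - b/48$ completes the argument. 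The only step requiring real care is the fourth-moment identity for the complex Rademacher sum: the off-diagonal piece $\sum_{i\neq j} a_i^2 \bar a_j^2 = |\sum a_i^2|^2 - \sum|a_i|^4$ is complex-valued a priori, but its modulus is controlled by $\|a\|_2^4$, which is what yields the clean constant $3$ and thus the uniform-in-$\widetilde X$ lower bound on $\Pr(|R| \geq \|a\|_2/\sqrt 2 \mid \widetilde X)$.
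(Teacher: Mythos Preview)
Your argument is correct and complete; every step checks out, including the fourth-moment identity for the complex Rademacher sum and the final choice of constants. However, your route is genuinely different from the paper's. The paper splits into two cases on $\|v\|_\infty$: when some coordinate is large it conditions on the remaining variables and reduces to single-variable anti-concentration, while when $v$ is spread it verifies directly that $\CRLCD_{L,u}^{X}(v)$ is bounded below (using both the lower and upper bounds $b\le|\widetilde X_i|\le b^{-1}$) and then invokes \cref{prop:anticonc-LCD}. Your approach, by contrast, is entirely self-contained: the symmetrization $\rho_{c,X}(v)^2\le\Pr(|T|\le 2c)$ together with the Rademacher representation and Paley--Zygmund avoids any appeal to the $\CRLCD$ machinery or to \cref{lemma:tv-anticoncentration}. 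A nice byproduct is that you only use the one-sided hypothesis $\Pr(|\widetilde X_i|\ge b)\ge b$, so your proof actually establishes a slightly stronger statement. The paper's version, on the other hand, has the expository merit of exercising the $\CRLCD$ framework that drives the main results.
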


\begin{proof}
Let $M$ be a sufficiently large constant depending only on $b$, to be determined during the course of the proof. We consider two cases, depending on $\|v\|_{\infty}$.\\

\textbf{Case I: }$\|v\|_{\infty} \geq M^{-1}$. Without loss of generality, suppose $|v_{1}| > M^{-1}$. Then, by conditioning on the variables $X_2,\dots,X_n$, we see that it suffices to prove that $\rho_{c,X_1}(v_1) \leq 1-c$, for some constant $c\in (0,1)$ depending only on $b$ (and $M$). But this follows immediately since $\Pr(|\widetilde{X_1}|\geq b)\geq b$.\\

\textbf{Case II: }. $\|v\|_{\infty} < M^{-1}$. In this case, it suffices to show that $\CRLCD_{L,u}^{X}(v) \geq Mb$, for $u = b^{3}$ and all $L$ sufficiently large, for then, \cref{eq:anticoncentration} follows immediately from \cref{prop:anticonc-LCD} by taking $M$ to be sufficiently large depending on $b$. 


In order to show this, by definition, it suffices to show that for all $\theta \in \C$ such that $0< |\theta| < Mb$, 
$$\E\left[\dist^{2}(\theta v \star \widetilde{X}, (\Z+i\Z)^{n}\right] \geq u|\theta|^{2}\|v\|_{2}^{2}.$$
For this, we begin by noting that for any such value of $\theta$,
\begin{align*}
\dist^{2}(\theta v \star \widetilde{X}, (\Z+i\Z)^{n}) 
&\geq \sum_{i=1}^{n}|\theta|^{2}|v_i|^2|\widetilde{X}_i|^{2}\boldsymbol{1}\left[|\theta v_i\widetilde{X}_i| \leq \frac{1}{10} \right]\\
&\geq \sum_{i=1}^{n}|\theta|^{2}|v_i|^2|\widetilde{X}_i|^{2}\boldsymbol{1}\left[|\widetilde{X}_i| \leq b^{-1} \right]\\
&\geq \sum_{i=1}^{n}|\theta|^{2}|v_i|^2|\widetilde{X}_i|^{2}\boldsymbol{1}\left[b\leq |\widetilde{X}_i| \leq b^{-1} \right]\\
&\geq \sum_{i=1}^{n}b^{2}|\theta|^{2}|v_i|^2\boldsymbol{1}\left[b\leq |\widetilde{X}_i| \leq b^{-1}  \right].
\end{align*}
Therefore, taking the expectation on both sides, we see that
\begin{align*}
    \E\left[\dist^{2}(\theta v \star \widetilde{X}, (\Z+i\Z)^{n})\right]
    &\geq \sum_{i=1}^{n}b^{2}|\theta|^{2}|v_i|^2\E\left[\boldsymbol{1}\left[b\leq |\widetilde{X}_i| \leq b^{-1}  \right]\right]\\
    &\geq \sum_{i=1}^{n}b^{2}|\theta|^{2}|v_i|^2\cdot b\\
    &= b^{3}|\theta|^{2}\|v\|_{2}^{2}, 
\end{align*}
which gives the desired conclusion. 
\end{proof}

\section{Proof of Theorem \ref{thm:lsv-bound}}
In this section, we prove \cref{thm:lsv-bound} following \cite{livshyts2018smallest, livshyts2019smallest}. The only new ingredients are \cref{lemma:invertibility-single-vector} and \cref{lemma:incomp-anti-concentration}.\\

The first step in the proof of Theorem \ref{thm:lsv-bound} is to give a decomposition of the sphere $\S_{\C}^{n-1}$. For some parameters $\delta, \rho \in (0,1)$ to be chosen later, we define the sets of sparse, compressible, and incompressible vectors as follows:
\begin{align*}
    \Sparse(\delta)&:= \left\{ u\in \S_{\C}^{n-1}: \supp(u) \leq \delta n\right\},\\
    \Comp(\delta,\rho)&:=\left\{u \in \S_{\C}^{n-1}: \dist(u,\Sparse(\delta))\leq \rho \right\},\\
    \Incomp(\delta,\rho)&:= \S_{\C}^{n-1}\setminus \Comp(\delta,\rho).
\end{align*}
This results in
$$\S_{\C}^{n-1} = \Comp(\delta,\rho) \cup  \Incomp(\delta,\rho).$$ By characterization \eqref{eq:characterization}, we have 
$$ \Pr(s_n(A_n)\leq \epsilon\cdot n^{-1/2}) \le  \Pr\left( \inf_{x \in \Comp(\delta,\rho) }  \|A_nx\|_2 \leq \epsilon\cdot n^{-1/2}\right) + \Pr\left( \inf_{x \in \Incomp(\delta,\rho) } \|A_nx\|_2  \leq \epsilon\cdot n^{-1/2} \right).$$
We first deal with the compressible vectors. For this, as is standard, we begin with an estimate for `invertibility with respect to a single vector', which in our case, follows directly by combining  \cref{lemma:anticoncentration} with the so-called tensorization lemma (see Lemma 2.2 in \cite{rudelson2008littlewood}). 
\begin{lemma}
\label{lemma:invertibility-single-vector}
Let $A_{N,n}$ be an $N\times n$ complex random matrix whose entries $A_{i,j}$ are independent and satisfy  $\Pr\left(b^{-1} \geq |\widetilde{A_{i,j}}| \geq b \right) \geq b$ for some $b\in (0,1)$ .Then, for any fixed $v\in \S_{\C}^{n-1}$,
$$\Pr\left(\|A_{N,n}v\|_2\leq c_{\ref{lemma:invertibility-single-vector}}\sqrt{N}\right) \leq (1- c_{\ref{lemma:invertibility-single-vector}})^{N},$$
where $c_{\ref{lemma:invertibility-single-vector}}\in (0,1)$ is a constant depending only on $b$. 
\end{lemma}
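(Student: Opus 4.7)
The plan is to realize $\|A_{N,n} v\|_2^2$ as a sum of squares of norms of independent linear combinations, apply the anti-concentration estimate \cref{lemma:anticoncentration} to each individual summand, and then combine via the Rudelson--Vershynin tensorization lemma.

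More concretely, I would write $\|A_{N,n} v\|_2^2 = \sum_{i=1}^N |\langle R_i, v \rangle|^2$, where $R_1, \ldots, R_N \in \C^n$ denote the rows of $A_{N,n}$. Since the entries $A_{i,j}$ are independent, each row $R_i$ has independent coordinates which satisfy the hypothesis $\Pr(b^{-1} \geq |\widetilde{(R_i)_j}| \geq b) \geq b$ (in particular, the weaker assumption $\Pr(|\widetilde{(R_i)_j}| \geq b) \geq b$ required by \cref{lemma:anticoncentration}). Applying \cref{lemma:anticoncentration} to the fixed unit vector $v \in \S_{\C}^{n-1}$, I obtain a constant $c_0 := c_{\ref{lemma:anticoncentration}} \in (0,1)$ depending only on $b$ such that
$$\rho_{c_0, R_i}(v) \leq 1 - c_0,$$
and in particular, taking $x = 0$ in the definition of the L\'evy concentration function,
$$\Pr\bigl(|\langle R_i, v\rangle| \leq c_0 \bigr) \leq 1 - c_0$$
for every $i \in [N]$.

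To conclude, I would invoke the tensorization lemma (Lemma 2.2 of \cite{rudelson2008littlewood}): if $\zeta_1, \ldots, \zeta_N$ are independent non-negative random variables satisfying $\Pr(\zeta_i < \lambda) \leq p$ for some $\lambda > 0$ and $p \in (0,1)$, then there exist $\lambda', p' \in (0,1)$ depending only on $\lambda, p$ such that $\Pr\bigl(\sum_{i=1}^N \zeta_i^2 < (\lambda')^2 N\bigr) \leq (p')^N$. Applying this to $\zeta_i := |\langle R_i, v \rangle|$ with $\lambda = p = c_0$, I obtain constants $c_1, c_2 \in (0,1)$ depending only on $b$ such that
$$\Pr\bigl(\|A_{N,n} v\|_2 \leq c_1 \sqrt{N}\bigr) \leq (1 - c_2)^N.$$
Setting $c_{\ref{lemma:invertibility-single-vector}} := \min(c_1, c_2)$ yields the desired bound. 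There is no serious obstacle here: the anti-concentration input has already been established, the independence of rows gives independence of the summands, and tensorization is a black box. The only point requiring a bit of care is making sure the constant $c_0$ from \cref{lemma:anticoncentration} genuinely depends only on $b$ and not on $v$ or $n$, which is exactly what that lemma asserts.
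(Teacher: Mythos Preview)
Your proposal is correct and matches the paper's own proof exactly: the paper states that the lemma ``follows directly by combining \cref{lemma:anticoncentration} with the so-called tensorization lemma (see Lemma~2.2 in \cite{rudelson2008littlewood}),'' which is precisely the argument you wrote out. One minor quibble: your parenthetical remark that \cref{lemma:anticoncentration} only requires the one-sided hypothesis $\Pr(|\widetilde{(R_i)_j}| \geq b) \geq b$ is not quite right---its statement and proof use the full two-sided bound---but this is immaterial since the hypothesis of the present lemma already gives you the two-sided version.
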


The following crucial theorem about the existence of a suitable net on the sphere parameterized by $\|A\|_{\HS}$ follows from Corollary $4$ of \cite{livshyts2018smallest}. Note that our net is a subset of $\C^{n}$ rather than $\R^n$ as proven in \cite{livshyts2018smallest} but the same argument used there also works in our setting with slightly worse constants.

\begin{theorem}\label{thm:net}
Fix $N,n \in \N$ and consider any subset $S \subset \S_{\C}^{n-1}$. For any $\mu \in (0,1)$, and for every $\epsilon \in (0,\mu^{c_0})$ (for some absolute constant $c_0 > 0$), there exists a deterministic net $\mathcal{N} \subset \C^{n}$, with
$$|\mathcal{N}| \leq N(S,\epsilon B_{2}^{n})\cdot (O(\epsilon))^{\mu n},$$
and there exist positive constants $C_1(\mu), C_2(\mu)$ such that for every random $N\times n$ complex random matrix $A_{N,n}$ with independent columns, with probability at least
$$1 - e^{-C_1(\mu)n},$$
for every $x\in S$, there exists $y \in \mathcal{N}$ so that
$$\|A_{N,n}(x-y)\|_{2} \leq \frac{C_2(\mu)\epsilon}{\sqrt{n}}\sqrt{\E[\|A\|_{\HS}^{2}]}.$$
Here, $N(S, \epsilon B_{2}^{n})$ denotes the covering number of $S$ by $\epsilon B_{2}^{n}$. 
\end{theorem}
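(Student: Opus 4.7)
The plan is to invoke the real-valued analogue proved as Corollary~$4$ of \cite{livshyts2018smallest} and observe that its proof transfers essentially verbatim to complex-valued coordinates. The underlying idea is the \emph{randomized sparsification net} construction: starting from a standard $\epsilon$-net $\mathcal{N}_0 \subseteq S$ of cardinality $N(S,\epsilon B_2^n)$, one enlarges $\mathcal{N}_0$ into a net $\mathcal{N}$ by appending, for each $x_0\in\mathcal{N}_0$, all vectors obtained by (a) choosing a subset $J\subseteq[n]$ of size at most $\mu n$, (b) keeping the coordinates of $x_0$ on $J$ (suitably rescaled to be unbiased), and (c) discretizing each surviving complex coordinate on a grid of mesh $\sim\epsilon$. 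Each surviving complex coordinate contributes $O(1/\epsilon^2)$ grid points instead of the $O(1/\epsilon)$ in the real case; combined with the $\binom{n}{\mu n}$ choices for $J$, this yields $|\mathcal{N}|\le N(S,\epsilon B_2^n)\cdot (O(\epsilon))^{\mu n}$ as long as $\epsilon<\mu^{c_0}$ for a suitable absolute constant $c_0 > 0$.

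The central analytic step is to show that, for each fixed $x_0\in\mathcal{N}_0$ and a random Bernoulli sparsification $y$ of $x_0$ with retention probability $p=p(\mu)$, the independence of the $y_i$ together with the independence of the columns $A_{N,n}^{(i)}$ yields the exact variance identity
\begin{align*}
\E_{y}\|A_{N,n}(x_0-y)\|_2^2 \;=\; \frac{1-p}{p}\sum_{i=1}^n |x_{0,i}|^2 \,\|A_{N,n}^{(i)}\|_2^2.
\end{align*}
A Chernoff bound ensures $|\supp(y)|\le\mu n$ with probability $1-e^{-\Omega(n)}$, so the probabilistic method produces a deterministic $y^*\in \mathcal{N}$ realizing a constant multiple of this expected bound. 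Taking expectation over $A_{N,n}$ converts the column norms $\|A_{N,n}^{(i)}\|_2^2$ into $\E\|A_{N,n}\|_{\HS}^2$, and a further union bound over $\mathcal{N}_0$ combined with a standard net-exchange at scale $\epsilon$ yields the stated pointwise approximation uniformly on $S$ with the claimed probability $1-e^{-C_1(\mu)n}$.

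The only genuine obstacle in porting the argument from $\R^n$ to $\C^n$ is bookkeeping: complex discretization doubles the effective degrees of freedom per coordinate, so the constants $C_1(\mu),C_2(\mu)$ must absorb an extra factor of the form $\epsilon^{-\mu n}$. This factor is tamed exactly by the hypothesis $\epsilon < \mu^{c_0}$, and is the sense in which the ``slightly worse constants'' remark applies. Apart from replacing $\R$ by $\C$ in every step of the original proof and tracking this one extra factor through the cardinality bound, no further idea is required.
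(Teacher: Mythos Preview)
Your high-level claim---that Corollary~4 of \cite{livshyts2018smallest} transfers from $\R^n$ to $\C^n$ with only cosmetic changes---is exactly what the paper asserts, and the paper offers no more detail than that. So at the level of ``what needs to be said,'' you and the paper agree.

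However, the sketch you give of the underlying Livshyts argument is not the correct one, and as written it does not produce the stated bound. Livshyts' construction is \emph{random rounding}, not Bernoulli sparsification: for each coordinate one rounds $x_i$ to one of the two nearest points of the lattice $\frac{\epsilon}{\sqrt{n}}\Z$ (or $\frac{\epsilon}{\sqrt{n}}(\Z+i\Z)$ in the complex case) in an unbiased way, so that $\E[y_i]=x_i$ and $|x_i-y_i|\le \epsilon/\sqrt{n}$ deterministically. Independence across coordinates then gives
\[
\E_y\|A(x-y)\|_2^2=\sum_{i=1}^n \E|x_i-y_i|^2\,\|A_i\|_2^2\le \frac{\epsilon^2}{n}\|A\|_{\HS}^2,
\]
which is the source of the crucial $\epsilon/\sqrt{n}$ factor. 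Your Bernoulli sparsification identity $\E_y\|A(x_0-y)\|_2^2=\frac{1-p}{p}\sum_i|x_{0,i}|^2\|A_i\|_2^2$ is correct as stated, but for a unit vector $x_0$ it is only bounded by a constant times $\max_i\|A_i\|_2^2$, with no $\epsilon^2/n$ in sight; it is the wrong order of magnitude. Moreover, your ``standard net-exchange at scale $\epsilon$'' from $x$ to $x_0\in\mathcal{N}_0$ would contribute $\|A\|\cdot\epsilon$ via the operator norm, which is precisely the dependence the theorem is designed to avoid. In the actual argument one random-rounds $x$ itself (not a net point), and the role of the covering number $N(S,\epsilon B_2^n)$ is only to count how many lattice points lie in an $\epsilon$-neighborhood of $S$. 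The complex adaptation then really is just replacing $\Z$ by $\Z+i\Z$ in the rounding and absorbing the resulting squared cardinality into the constants, as both you and the paper note.
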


Using \cref{thm:net} and the invertibility with respect to a single vector from  \cref{lemma:invertibility-single-vector}, the following anti-concentration result for compressible vectors follows identically from Lemma $5.3$ of \cite{livshyts2018smallest}.
 
\begin{proposition}\label{prof:compressible-bound}
Let $A$ be an $ n\times n$ random matrix whose entries $A_{i,j}$ are independent and satisfy $\E\|A\|_{\HS}^{2} \leq Kn^{2}$ for some $K > 0$, and $\Pr\left(|\widetilde{A_{i,j}}|\geq b\right) \geq b$ for some $b\in (0,1)$. Then,
$$\Pr\left(\inf_{x\in \Comp(\delta,\rho)}\|Ax\|_{2} \leq C_{\ref{prof:compressible-bound}}\sqrt{n} \right) \leq 2e^{-c_{\ref{prof:compressible-bound}n}},$$
where $\rho,\delta \in (0,1)$ and $C_{\ref{prof:compressible-bound}},c_{\ref{prof:compressible-bound}} > 0$ depend only on $K$ and $b$. 
\end{proposition}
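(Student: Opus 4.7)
The plan is to apply the standard \emph{net plus single-vector invertibility} strategy to the compressible set, following Lemma 5.3 of \cite{livshyts2018smallest}. First I would bound the covering number of $\Comp(\delta,\rho)$ in Euclidean balls: every compressible $x$ lies within distance $\rho$ of some unit vector supported on a $\delta n$-subset, and a volumetric net on each sparse coordinate sphere (noting that $\S_{\C}^{n-1}$ has real dimension $2n-1$) yields
\[
N(\Comp(\delta,\rho), \epsilon B_2^n) \;\leq\; \binom{n}{\delta n}\bigl(C(\epsilon+\rho)^{-1}\bigr)^{2\delta n}.
\]
Feeding this into \cref{thm:net} with $S=\Comp(\delta,\rho)$ and a parameter $\mu\in(2\delta,1)$ produces, on an event of probability at least $1-e^{-C_1(\mu)n}$, a deterministic net $\mathcal{N}\subset\C^n$ of size
\[
|\mathcal{N}| \;\leq\; \binom{n}{\delta n}\bigl(C(\epsilon+\rho)^{-1}\bigr)^{2\delta n}(C_0\epsilon)^{\mu n},
\]
such that every $x\in\Comp(\delta,\rho)$ admits a $y\in\mathcal{N}$ with $\|A(x-y)\|_2\leq C_2(\mu)\epsilon\sqrt{K}\sqrt{n}$; here the hypothesis $\E\|A\|_{\HS}^2\leq Kn^2$ is used.

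Next I would apply \cref{lemma:invertibility-single-vector} to each (suitably normalized) net point: for fixed $y\in\S_{\C}^{n-1}$ the failure probability is $(1-c_0)^n$, with $c_0:=c_{\ref{lemma:invertibility-single-vector}}$ depending only on $b$. A union bound gives
\[
\Pr\!\Bigl(\exists\,y\in\mathcal{N}:\ \|Ay\|_2\leq c_0\sqrt{n}\Bigr) \;\leq\; |\mathcal{N}|(1-c_0)^n.
\]
On the intersection of the net event and the complement of this bad event, if some $x\in\Comp(\delta,\rho)$ satisfied $\|Ax\|_2\leq (c_0/2)\sqrt{n}$, then its net approximant would obey $\|Ay\|_2\leq (c_0/2+C_2(\mu)\epsilon\sqrt{K})\sqrt{n}\leq c_0\sqrt{n}$ once $\epsilon$ is small enough, a contradiction. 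Thus one may take $C_{\ref{prof:compressible-bound}}=c_0/2$, and the proposition reduces to showing $|\mathcal{N}|(1-c_0)^n\leq e^{-c'n}$ for some $c'>0$.

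The main obstacle is the resulting parameter balancing. Using $\binom{n}{\delta n}\leq(e/\delta)^{\delta n}$, the exponent of $|\mathcal{N}|(1-c_0)^n$ is bounded above by
\[
\delta n\log(e/\delta) + 2\delta n\log\!\bigl(C/(\epsilon+\rho)\bigr) + \mu n\log(C_0\epsilon) - c_0 n.
\]
The $\epsilon$-dependent terms combine to $(\mu-2\delta)n\log\epsilon$, which is very negative as $\epsilon\to 0$ precisely because we chose $\mu>2\delta$. I would therefore fix parameters in the order $\delta,\mu,\rho,\epsilon$: first choose $\delta$ small enough (depending only on $b$) that $\delta\log(e/\delta)<c_0/4$; then set $\mu=3\delta$; then shrink $\rho,\epsilon$ until the $(\mu-2\delta)n\log\epsilon$ term dominates the remainder. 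All constants then depend only on $K$ and $b$, as required. The subtle point — and what rules out a naive union bound — is the ordering of quantifiers: one must not send $\epsilon,\rho\to 0$ before choosing $\delta$ and $\mu$. This is precisely where the compensating $(C_0\epsilon)^{\mu n}$ factor supplied by \cref{thm:net} is decisive, and is the manipulation carried out in \cite{livshyts2018smallest}.
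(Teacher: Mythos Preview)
Your proposal is correct and is exactly the approach the paper has in mind: the paper itself gives no details, stating only that the result ``follows identically from Lemma~5.3 of \cite{livshyts2018smallest}'' using \cref{thm:net} and \cref{lemma:invertibility-single-vector}, which is precisely the net-plus-single-vector-invertibility argument you outline. One small slip: your covering number for $\Comp(\delta,\rho)$ should have $\epsilon$ (or $\epsilon-\rho$) rather than $\epsilon+\rho$ in the denominator, but this does not affect the parameter balancing, since the $(\mu-2\delta)n\log\epsilon$ term still dominates once $\mu>2\delta$ and $\epsilon$ is sent small.
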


For the incompressible vectors, we use an `invertibility via distance' bound similar to \cite{rudelson2008littlewood}. The precise version we use appears in \cite{livshyts2019smallest}.

\begin{lemma}[Invertibility via distance, Lemma 6.1 in \cite{livshyts2019smallest}]
\label{lemma:invertibility-via-distance}
Fix a pair of parameters $\delta, \rho \in (0,1/2)$, and assume that $n \geq 4/\delta$. Then, for any $\epsilon > 0$,
$$\Pr\left(\inf_{x\in \Incomp(\delta,\rho)}\|Ax\|_2 \leq \epsilon \frac{\rho}{\sqrt{n}}\right) \leq \frac{4}{\delta n} \inf_{I \subset [n], |I| = n - \lfloor \delta n/2 \rfloor}\sum_{j \in I}\Pr\left(\dist(A_j, H_j) \leq \epsilon\right),$$
where $H_j$ denotes the subspace spanned by all the columns of $A$ except for $A_j$. 
\end{lemma}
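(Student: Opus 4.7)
The plan is to follow the classical Rudelson--Vershynin invertibility-via-distance paradigm, which transfers from the real to the complex setting without any change of substance. The argument splits into three clean steps.

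First, I would establish the standard spread property of incompressible vectors. For any $x \in \Incomp(\delta, \rho)$, I claim the set
$$\sigma(x) := \left\{ j \in [n] : |x_j| \geq c\rho/\sqrt{n} \right\}$$
has cardinality at least $c'\delta n$, for suitable absolute constants $c, c' > 0$. The usual proof is by contradiction: if $\sigma(x)$ were too small, then zeroing out the coordinates outside $\sigma(x)$ and renormalizing would produce a unit vector in $\Sparse(\delta)$ within distance $\rho$ of $x$, contradicting $\dist(x, \Sparse(\delta)) > \rho$. The assumption $n \geq 4/\delta$ enters here to keep the renormalization step from costing too much. Crucially, the argument uses only $|x_j|$, so it is insensitive to whether the coordinates are real or complex.

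Second, for any $x \in \C^n$ and any $j \in [n]$, I would use the column decomposition $Ax = x_j A_j + \sum_{k\neq j} x_k A_k$. Since the sum on the right lies in $H_j$, we immediately obtain the pointwise deterministic inequality
$$\|Ax\|_2 \geq |x_j| \cdot \dist(A_j, H_j).$$
Combined with Step 1, this shows that on the event $\Omega := \{\inf_{x \in \Incomp(\delta, \rho)} \|Ax\|_2 \leq \epsilon \rho/\sqrt n\}$, if $x$ attains the infimum then $\dist(A_j, H_j) \leq \epsilon/c$ for every $j \in \sigma(x)$. Consequently, the random ``bad'' set $J := \{j : \dist(A_j, H_j) \leq \epsilon/c\}$ has cardinality at least $c'\delta n$ on $\Omega$.

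Third, I would convert the lower bound on $|J|$ into the desired probability estimate via a simple averaging argument. After calibrating the spread constant $c'$ so that $c'\delta n - \lfloor \delta n/2 \rfloor \geq \delta n/4$ (which I can arrange by strengthening Step 1 slightly), every fixed $I \subseteq [n]$ of size $n - \lfloor \delta n/2 \rfloor$ satisfies $|I \cap J| \geq \delta n / 4$ on $\Omega$. Integrating the indicator bound
$$\mathbf{1}_\Omega \cdot \frac{\delta n}{4} \leq \sum_{j \in I} \mathbf{1}\{\dist(A_j, H_j) \leq \epsilon/c\}$$
and taking the infimum over $I$ yields the claimed inequality, with the absolute constants absorbed into the factor $4$ (and the $c$ absorbed by redefining $\epsilon$). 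The only substantive point is getting the constants in the spread lemma tight enough that the averaging in Step 3 actually goes through with $|I| = n - \lfloor \delta n/2 \rfloor$; I expect this constant-chasing to be the most tedious (but not conceptually difficult) part, while the rest is deterministic linear algebra that reads verbatim from the real case.
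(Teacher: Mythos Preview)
The paper does not supply a proof of this lemma at all: it is quoted verbatim as Lemma~6.1 of \cite{livshyts2019smallest} and used as a black box, so there is no ``paper's own proof'' to compare against. Your three-step outline is the standard Rudelson--Vershynin argument, carried over to $\C^n$, and it is correct. One small sharpening: your constant-chasing worry in Step~3 is unnecessary, because the one-sided spread lemma (only a lower bound on $|x_j|$) already delivers $c'=1$. Indeed, if $S=\{j:|x_j|<\rho/\sqrt{2n}\}$ then $\|x_S\|_2<\rho/\sqrt{2}$, and a short computation gives $\|x - x_{S^c}/\|x_{S^c}\|\|_2^2 = 2(1-\sqrt{1-\|x_S\|_2^2})<\rho^2$; hence $|S^c|>\delta n$ whenever $x\in\Incomp(\delta,\rho)$. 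With $|\sigma(x)|\geq\delta n$ and $|I|=n-\lfloor\delta n/2\rfloor$ one gets $|I\cap\sigma(x)|\geq\delta n/2$ directly (using $n\geq 4/\delta$ only to ensure $\lfloor\delta n/2\rfloor\leq\delta n/2$), and the averaging in Step~3 goes through with room to spare for the stated constant $4/(\delta n)$.
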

From the previous lemma, to control $\|Ax\|_2$ for $x$ over the incompressible vectors, it suffices to understand the anti-concentration of $\dist(A_j, H_j)$ where $H_j$ denotes the subspace spanned by all the columns of $A$ except $A_j$. For this, we begin by noting that $\dist(A_j, H_j) \geq |\langle A_j, \nu_{j} \rangle|$, where $\nu_{j}$ denotes any unit vector normal to $H_j$, so that anti-concentration of $\dist(A_j, H_j)$ reduces to studying the anti-concentration properties of a unit normal to a random hyperplane. Before proceeding to the details, we will need the following lemma, which shows that incompressible vectors have sufficiently large CRLCD.  

\begin{lemma}[Incompressible vectors have large CRLCD, Lemma $2.10$ in \cite{livshyts2019smallest}]
\label{lemma:incomp-large-CRLCD}
For any $b,\delta, \rho \in (0,1)$, there are $n_0 = n_0(b,\delta, \rho)$, $h_{\ref{lemma:incomp-large-CRLCD}} = h_{\ref{lemma:incomp-large-CRLCD}}(b,\delta,\rho)\in (0,1)$ and $u_{\ref{lemma:incomp-large-CRLCD}} = u_{\ref{lemma:incomp-large-CRLCD}}(b,\delta,\rho) \in (0,1/4)$ with the following property. Let $n \geq n_0$, let $v \in \Incomp_{n}(\delta,\rho)$, and assume that a random vector $X = (X_1,\dots,X_n)$ with independent components satisfies $ \Pr\left(|\widetilde{X}_{i}| \geq b\right) \geq b$ for all $1\leq i\leq n$, and $\E\|X\|^{2} \leq T$, for some fixed parameter $T  \gtrsim n$. Then, for any $L > 0$, we have
$$\CRLCD_{L,u_{\ref{lemma:incomp-large-CRLCD}}}^{X}(v) \geq h_{\ref{lemma:incomp-large-CRLCD}}\cdot \frac{n}{\sqrt{T}} $$
\end{lemma}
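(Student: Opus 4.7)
My plan is to reduce the $\CRLCD$ lower bound to a direct verification: that for all sufficiently small $|\theta|$, the expectation $\E[\dist^2(\theta v \star \widetilde{X}, (\Z + i\Z)^n)]$ is at least $u\,|\theta|^2 \|v\|_2^2$, which immediately rules out those $\theta$ from the infimum in the definition of $\CRLCD^X_{L,u}(v)$, independent of $L$. The overall strategy follows the proof of Lemma $2.10$ in \cite{livshyts2019smallest}, adjusted for the complex setting and the Gaussian integer lattice.

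The first step is to harvest a ``spread coordinate set.'' The standard incompressibility lemma (Lemma $3.4$ in \cite{rudelson2008littlewood}, whose proof goes through verbatim over $\C$) yields constants $c_1, c_2, C_3 > 0$ depending only on $\delta, \rho$ and a subset $\sigma(v) \subseteq [n]$ with $|\sigma(v)| \geq c_1 n$ on which $c_2/\sqrt{n} \leq |v_i| \leq C_3/\sqrt{n}$. A Markov step on $\sum_i \E|X_i|^2 \leq T$ then ensures that at most $c_1 n/4$ indices $i$ satisfy $\E|X_i|^2 > 4T/(c_1 n)$; setting $\sigma'(v) := \sigma(v) \cap \{i : \E|X_i|^2 \leq 4T/(c_1 n)\}$ gives $|\sigma'(v)| \geq 3 c_1 n/4$ together with a uniform second-moment bound on the good coordinates.

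For each $i \in \sigma'(v)$, I choose $R := \sqrt{16/(c_1 b)}\cdot \sqrt{T/n}$ so that another Markov step yields $\Pr(|\widetilde{X}_i| > R) \leq \E|\widetilde{X}_i|^2/R^2 \leq b/2$; combined with the hypothesis $\Pr(|\widetilde{X}_i| \geq b) \geq b$, the event $E_i := \{b \leq |\widetilde{X}_i| \leq R\}$ satisfies $\Pr(E_i) \geq b/2$. Taking $h_{\ref{lemma:incomp-large-CRLCD}} := \sqrt{c_1 b}/(16 C_3)$, for any $|\theta| \leq h_{\ref{lemma:incomp-large-CRLCD}}\, n/\sqrt{T}$ and $i \in \sigma'(v)$, on $E_i$ we get $|\theta v_i \widetilde{X}_i| \leq 1/4$; in particular both real and imaginary parts are bounded by $1/2$, so $0$ is the nearest Gaussian integer and $\dist^2(\theta v_i \widetilde{X}_i, \Z + i\Z) = |\theta v_i \widetilde{X}_i|^2 \geq b^2 |\theta|^2 |v_i|^2$.

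Finally, using that $(\Z + i\Z)^n$ is a product lattice and the $\widetilde{X}_i$ are independent across $i$, I sum the coordinate-wise bounds: $\E[\dist^2(\theta v \star \widetilde{X}, (\Z + i\Z)^n)] = \sum_i \E[\dist^2(\theta v_i \widetilde{X}_i, \Z + i\Z)] \geq (b/2)\cdot b^2 \cdot |\theta|^2 \sum_{i \in \sigma'(v)} |v_i|^2 \geq (3 b^3 c_1 c_2^2/8)\, |\theta|^2 \|v\|_2^2$, using $|v_i|^2 \geq c_2^2/n$ on $\sigma'(v)$ and $|\sigma'(v)| \geq 3 c_1 n/4$. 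Setting $u_{\ref{lemma:incomp-large-CRLCD}} := \min(3 b^3 c_1 c_2^2/8,\, 1/5)$ then yields the conclusion. Most steps are routine Markov bookkeeping; the only delicate point I expect is the interplay between the tail threshold $R = \Theta(\sqrt{T/n})$ and the coordinate scale $C_3/\sqrt{n}$, which must match precisely so that $|\theta v_i \widetilde{X}_i|$ stays below $1/2$ on $E_i$, allowing the collapse of the Gaussian-integer distance to the modulus.
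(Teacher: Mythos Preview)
Your proposal is correct and follows essentially the same approach as the paper: the paper does not give its own proof but simply refers to Lemma~2.10 of \cite{livshyts2019smallest} with the remark that the argument there carries over verbatim to the complex $\CRLCD$, and your sketch is precisely that argument (spread set from incompressibility, Markov pruning on $\E|X_i|^2$, two-sided truncation of $|\widetilde{X}_i|$, and the observation that for small $|\theta|$ the nearest lattice point is $0$). The only small omission is the harmless inequality $\E|\widetilde{X}_i|^2 \le 2\,\E|X_i|^2$ needed in your Markov step for the upper tail of $|\widetilde{X}_i|$.
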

\begin{remark}
In \cite{livshyts2019smallest}, the above proposition is proved for the notion of RLCD defined there, but the same proof goes through for the CRLCD as well. 
\end{remark}

We can now prove the desired invertibility on incompressible vectors.

\begin{proposition} \label{lemma:incomp-anti-concentration}
Let $A$ be an $ n\times n$ random matrix whose entries $A_{i,j}$ are independent and satisfy $\E\|A\|_{\HS}^{2} \leq Kn^{2}$ for some $K > 0$, and $\Pr\left(b^{-1} \geq |\widetilde{A_{i,j}}|\geq b\right) \geq b$ for some $b\in (0,1)$.
Fix a pair of parameters $\delta, \rho \in (0,1/2)$, and assume that $n \ge 4/\delta$. There exists absolute constants $C_{\ref{lemma:incomp-anti-concentration}}, c'_{\ref{lemma:incomp-anti-concentration}}$ that only depend on $\delta, \rho, b, K$ such that for any $\epsilon \in (0,1)$, 
$$\Pr\left(\inf_{x\in \Incomp(\delta,\rho)}\|Ax\|_2 \leq \epsilon \frac{\rho}{\sqrt{n}}\right) \leq C_{\ref{lemma:incomp-anti-concentration}} \left( \epsilon  +  \exp(-c'_{\ref{lemma:incomp-anti-concentration}}\epsilon^{2}n)\right).$$
\end{proposition}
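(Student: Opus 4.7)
The plan is to follow the Rudelson--Vershynin scheme as adapted in \cite{livshyts2019smallest}, combining the invertibility-via-distance reduction of \cref{lemma:invertibility-via-distance} with anti-concentration for an inner product against a unit normal, where the new ingredient is our complex anti-concentration inequality \cref{prop:anticonc-LCD}. First I would apply \cref{lemma:invertibility-via-distance} with an index set $I\subset[n]$, $|I|\geq n-\lfloor\delta n/2\rfloor$, consisting of indices $j$ for which $\E\|A_j\|_2^2\leq 4Kn/\delta$; such $I$ exists by Markov's inequality applied to $\sum_{j=1}^{n}\E\|A_j\|_2^2=\E\|A\|_{\HS}^2\leq Kn^2$. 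It then suffices to prove the pointwise distance bound
\begin{equation*}
\Pr(\dist(A_j,H_j)\leq\epsilon) \leq C\bigl(\epsilon + \exp(-c\epsilon^2 n) + e^{-cn}\bigr),\qquad j\in I,
\end{equation*}
where $H_j$ denotes the span of $\{A_k : k\neq j\}$.

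Fix $j\in I$ and choose a unit vector $\nu_j\in H_j^{\perp}$ measurably with respect to the $\sigma$-algebra $\mathcal{F}_j$ generated by $\{A_k : k\neq j\}$; then $\dist(A_j,H_j)\geq|\langle A_j,\nu_j\rangle|$, so conditioning on $\mathcal{F}_j$ gives $\Pr(\dist(A_j,H_j)\leq\epsilon\mid\mathcal{F}_j)\leq\rho_{\epsilon,A_j}(\nu_j)$. The key step is to show that $\nu_j\in\Incomp(\delta,\rho)$ outside an $\mathcal{F}_j$-measurable bad event $\mathcal{B}_j$ of probability at most $2e^{-cn}$. For this I would observe that the net-and-tensorization proof of \cref{prof:compressible-bound} goes through essentially verbatim for the $(n-1)\times n$ matrix $(A^{(j)})^{*}$ obtained by deleting the $j$-th column and taking adjoints (the ingredients \cref{thm:net} and \cref{lemma:invertibility-single-vector} are already stated for rectangular matrices), yielding $\inf_{v\in\Comp(\delta,\rho)}\|(A^{(j)})^{*}v\|_2\geq c_0\sqrt{n}$ off an event of exponentially small probability. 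Since $(A^{(j)})^{*}\nu_j=0$, this forces $\nu_j\in\Incomp(\delta,\rho)$ on the good event.

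On the good event, since $\E\|A_j\|_2^2\leq 4Kn/\delta$, \cref{lemma:incomp-large-CRLCD} applied with $T=4Kn/\delta$ gives $\CRLCD_{L,u_0}^{A_j}(\nu_j)\geq h_0\sqrt{n}$ for every $L>0$, where $u_0,h_0\in(0,1)$ depend only on $b,\delta,\rho,K$. Feeding this into \cref{prop:anticonc-LCD} with $L$ a sufficiently large absolute constant (to absorb the middle $\exp(-L^2/4)$ term) yields $\rho_{\epsilon,A_j}(\nu_j)\lesssim\epsilon+\exp(-c'\epsilon^2 n)$; combining with $\Pr(\mathcal{B}_j)\leq 2e^{-cn}$ and summing over $j\in I$ via \cref{lemma:invertibility-via-distance} completes the proof. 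The main technical point I expect is the rectangular compressibility estimate for $(A^{(j)})^{*}$ that underlies the incompressibility of $\nu_j$: one must verify that \cref{prof:compressible-bound} does not rely on the matrix being square. This is routine because its proof is built on rectangular tools, and the rest of the argument is a direct transcription of the real case in \cite{livshyts2019smallest}, with \cref{prop:anticonc-LCD} replacing the corresponding real-variable anti-concentration inequality.
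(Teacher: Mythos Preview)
Your overall plan coincides with the paper's proof: pick $I$ by a Markov/pigeonhole argument on column second moments, reduce to distances via \cref{lemma:invertibility-via-distance}, show that any unit normal $\nu_j$ to $H_j$ is incompressible (by running the argument behind \cref{prof:compressible-bound} on the $(n-1)\times n$ matrix $(A^{(j)})^{*}$), invoke \cref{lemma:incomp-large-CRLCD}, and finish with \cref{prop:anticonc-LCD}. One minor slip: the compressibility argument produces its own parameters, so you should conclude $\nu_j\in\Incomp(\delta',\rho')$ for some $\delta',\rho'$ depending only on $K,b$, not $\Incomp(\delta,\rho)$ with the given $\delta,\rho$; this is harmless, since \cref{lemma:incomp-large-CRLCD} is then applied with $\delta',\rho'$.

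There is, however, one genuine error. You propose to apply \cref{prop:anticonc-LCD} with $L$ a ``sufficiently large absolute constant'' so as to absorb the $\exp(-L^{2}/4)$ term. This cannot work: if $L$ is an absolute constant, then $\exp(-L^{2}/4)$ is a fixed positive number, and the resulting bound
\[
\rho_{\epsilon,A_j}(\nu_j)\ \lesssim\ \epsilon\,u_0^{-1/2}+\exp(-L^{2}/4)+\exp(-c\epsilon^{2}n)
\]
cannot be dominated by $C(\epsilon+\exp(-c'\epsilon^{2}n))$ uniformly in $\epsilon\in(0,1)$ and $n$ (take, e.g., $\epsilon=n^{-1/4}$ and $n\to\infty$). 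The fix is immediate once noticed: since \cref{lemma:incomp-large-CRLCD} gives the CRLCD lower bound for \emph{every} $L>0$, you are free to let $L$ grow with $n$. The paper chooses $L>2\sqrt{C'(b,K)\,n}$, so that $\exp(-L^{2}/4)\le \exp(-C'(b,K)n)\le \exp(-C'(b,K)\epsilon^{2}n)$ for all $\epsilon\in(0,1)$, and this term is absorbed into the last one. With that correction (and the $\delta',\rho'$ adjustment above), your argument is exactly the paper's.
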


\begin{proof}

Fix $\delta \in (0, 1/2)$. Since $\E\|A\|_{\HS}^2 \le K n^2$, there must be at least $(1-\delta/4)n$ columns $A_i$ that satisfy $\E \|A_i\|_2^2 \le 4Kn/\delta$. Let $I$ denote an arbitrary set of such indices of size $n - \lfloor \delta n/2 \rfloor$. We will apply \cref{lemma:invertibility-via-distance} with this choice of $I$. 

For this, fix $i \in I$, and let $H_i$ denote the span of all columns of the matrix except for $A_i$. Then, an identical argument to \cref{prof:compressible-bound} shows that, except with probability at most $\exp(-c_{\ref{prof:compressible-bound}}n/2)$, any unit vector $\nu$ which is orthogonal to $H_i$ must belong to $\Incomp(\delta',\rho')$, where $\delta', \rho'$ depend only on $K$ and $b$. Henceforth, we restrict ourselves to this event, and let $\nu$ denote a unit normal vector to the (random) hyperplane $H_i$. 

By \cref{lemma:incomp-large-CRLCD}, it follows that for any $L > 0$, 
$$\CRLCD^{A_j}_{L, u_{\ref{lemma:incomp-large-CRLCD}}}(\nu) \geq C(b,K)\sqrt{n}.$$
Therefore, by \cref{prop:anticonc-LCD}, it follows that
$$ \Pr\left(\dist(A_j, H_j) \leq \epsilon\right) \le  \rho_{2\epsilon,A_j}(\nu) \le C_{\ref{prop:anticonc-LCD}}\left(2\epsilon u_{\ref{lemma:incomp-large-CRLCD}}^{-1/2} + \exp\left(-\frac{1}{4}L^{2}\right) + \exp\left(-C'(b,K)\epsilon^{2}n \right)\right).$$
Finally, taking $L > 2\sqrt{C'(b,K) n}$, and using \cref{lemma:invertibility-via-distance} gives the desired conclusion. 
\end{proof}

\begin{proof}[Proof of Theorem \ref{thm:lsv-bound}]
The proof of \cref{thm:lsv-bound} follows from using characterization \eqref{eq:characterization} and combining Proposition \cref{prof:compressible-bound} and \cref{lemma:incomp-anti-concentration}. 
\end{proof}

\section{Proof of \cref{thm:universality-esd}}
By means of the so-called \emph{replacement principle} (Theorem 2.1 in \cite{tao2010random}), the following analogue of Proposition 2.2 in \cite{tao2010random} suffices to prove \cref{thm:universality-esd}. 

\begin{proposition}
\label{prop:converging-det}
Let $A_{n}(\boldsymbol{X})$ and $A_{n}(\boldsymbol{Y})$ be as in the statement of \cref{thm:universality-esd}. Then, for every fixed $z \in \C$, 
$$\frac{1}{n}\log\left|\det\left(\frac{1}{\sqrt{n}}A_{n}(\boldsymbol{X}) - zI\right)\right| - \frac{1}{n}\log\left|\det\left(\frac{1}{\sqrt{n}}A_{n}(\boldsymbol{Y}) - zI\right)\right|$$
converges in probability to zero. 
\end{proposition}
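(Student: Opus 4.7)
The plan is to adapt the Tao--Vu--Krishnapur strategy for deducing log-determinant convergence from a quantitative lower bound on the smallest singular value. I would write the normalized log determinant as an integral of $\log s$ against the empirical singular value distribution $\nu_{n,z,X}$ of $B_n(X):=n^{-1/2}A_n(X)-zI$, split at a threshold $\delta>0$, treat the bulk by Hermitization combined with Pastur's truncation condition, and control the tail using \cref{thm:lsv-bound}.

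First, express
\[
\frac{1}{n}\log|\det B_n(X)|=\int_0^{\delta}\log s\,d\nu_{n,z,X}(s)+\int_{\delta}^{\infty}\log s\,d\nu_{n,z,X}(s),
\]
and similarly for $Y$. For the bulk piece ($s\ge\delta$), the Hermitization/Stieltjes-transform arguments of Tao--Vu and Krishnapur show that $\nu_{n,z,X}$ restricted to $[\delta,\infty)$ converges weakly in probability to a deterministic measure depending only on $z$ and the variance profile $(\sigma_{i,j}^{(n)})$. This step uses only mean, variance, and the Pastur truncation condition (3) -- all shared by $X$ and $Y$ -- so the two bulk integrals agree up to $o(1)$ in probability and cancel in the difference.

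Second, for the tail piece, I would verify that $\sqrt{n}\,B_n(X)=(M_n-z\sqrt{n}\,I)+C_n\cdot \boldsymbol{X}$ satisfies the hypotheses of \cref{thm:lsv-bound}: the expected squared Hilbert--Schmidt norm is $O(n^{2})$ by (i), (ii), the constancy of $z$, and $\E|x_{i,j}|^{2}=1$; while the symmetrization of its $(i,j)$-entry equals $\sigma_{i,j}^{(n)}\widetilde{x_{i,j}}$, which by $\sigma_{i,j}^{(n)}\in[\alpha,\beta]$, hypothesis (2), and Markov applied to the second moment, satisfies $\Pr(b'^{-1}\geq|\sigma_{i,j}^{(n)}\widetilde{x_{i,j}}|\geq b')\geq b'$ for some $b'\in(0,1)$ depending only on $\alpha,\beta,b$. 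Thus \cref{thm:lsv-bound} applies and yields $s_n(B_n(X))\ge\epsilon/n$ with failure probability $O(\epsilon+e^{-c\epsilon^{2}n})$. Combined with the intermediate singular value control used in Tao--Vu (a consequence of the negative second moment identity together with a crude operator norm bound obtained from the Hilbert--Schmidt hypothesis), this forces $\int_0^{\delta}|\log s|\,d\nu_{n,z,X}(s)\to 0$ in probability as $\delta\to 0$, uniformly in $n$. The identical argument applies to $Y$, so both tail contributions vanish.

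The main obstacle -- precisely the gap that \cref{thm:lsv-bound} closes -- is the tail estimate near $s=0$: without a polynomial-in-$n$ lower bound on $s_n$, a single extreme singular value could contribute a non-negligible amount to $\frac{1}{n}\sum_k\log s_k$. Krishnapur's original argument accessed such a bound only under a technical Fourier-domination assumption on the entries; here the mild anti-concentration hypothesis (2) together with \cref{thm:lsv-bound} is the sole new input required, and once the reduction above is in place the remainder of the Tao--Vu--Krishnapur machinery runs essentially unchanged.
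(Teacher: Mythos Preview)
Your proposal is correct in spirit and correctly identifies the single new ingredient (applying \cref{thm:lsv-bound} to $A_n(\boldsymbol{X})-\sqrt{n}zI$, after verifying its hypotheses exactly as you do). The paper, however, organizes the Tao--Vu--Krishnapur reduction a bit differently from your singular-value ESD split.

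Rather than splitting $\int \log s\,d\nu_{n,z}$ at a threshold $\delta$ and invoking Hermitization plus intermediate singular value (Wegner-type) control for the tail, the paper uses the row-distance decomposition $\log|\det|=\sum_{i}\log\dist(X_i,V_i)$. It cites Steps 2, 3, 4 of Theorem C.2 in \cite{tao2010random} verbatim to handle the first $n-n^{0.99}$ terms, reducing everything to showing that the remaining sum $\frac{1}{n}\sum_{i\ge n-n^{0.99}}\log\dist(n^{-1/2}X_i,V_i)$ (and its $Y$-analogue) is $o(1)$ in probability. For that it simply bounds each summand crudely via $\sigma_1\ge\|X_i\|\ge\dist(X_i,V_i)\ge\dist(X_i,U_i)\ge\sigma_n$ (using \cref{lemma:negative-second-moment}), so polynomial bounds $\sigma_1\le n^{C}$ and $\sigma_n\ge n^{-C}$ suffice because $n^{0.99}\cdot O(\log n)/n\to 0$; the latter comes directly from \cref{thm:lsv-bound} with $\epsilon=n^{-1/4}$. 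This sidesteps the intermediate singular value control your route would need. A small imprecision in your sketch: in the inhomogeneous setting with $n$-dependent profile $(\sigma_{i,j}^{(n)})$ there need not be a single deterministic limit for $\nu_{n,z,X}$; the TVK argument only shows that the $X$ and $Y$ bulk pieces agree up to $o(1)$, which is all you use.
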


By using Steps 2,3,4 in the proof of Theorem C.2 in \cite{tao2010random} verbatim, the proof of \cref{prop:converging-det} is reduced to proving the following.

\begin{proposition}
\label{prop:log-det-sum}
Let $A_{n}(\boldsymbol{X})$ and $A_{n}(\boldsymbol{Y})$ be as in the statement of \cref{thm:universality-esd}, and $z \in \C$ be fixed. Let $X_1,\dots,X_n$ be the rows of $A_{n}(\boldsymbol{X}) - \sqrt{n} zI$ and, for each $1\leq i\leq n$, let $V_i$ be the $(i-1)$-dimensional space generated by $X_1,\dots,X_{i-1}$. Similarly, let $Y_1,\dots,Y_{n}$ be the rows of $A_{n}(\boldsymbol{Y}) - \sqrt{n}zI$ and, for each $1\leq i \leq n$, let $W_i$ be the $(i-1)$-dimensional space generated by $Y_1,\dots,Y_{i-1}$. Then, 
$$\frac{1}{n} \sum_{n-n^{0.99} \leq i \leq n}\left(\log\dist\left(\frac{1}{\sqrt{n}}X_i, V_i\right) - \log\dist\left(\frac{1}{\sqrt{n}}Y_i, W_i\right)\right) $$
converges in probability to zero. 
\end{proposition}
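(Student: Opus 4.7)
My plan is to establish the stronger statement that each of $\frac{1}{n}\sum_{n-n^{0.99}\le i\le n}\log\dist(X_i/\sqrt n, V_i)$ and its $\boldsymbol Y$-analogue converges to $0$ in probability separately; the conclusion then follows by subtraction. By symmetry I focus on $\boldsymbol{X}$ and set $M := A_n(\boldsymbol{X}) - \sqrt{n}zI$, so that $X_1,\dots,X_n$ are the rows of $M$. A first routine step is to verify that $M$ satisfies the hypotheses of \cref{thm:lsv-bound}: the entries are independent, and since translations cancel under symmetrization, $\widetilde{M_{ij}} = \sigma_{ij}\widetilde{x_{ij}}$; combining $\Pr(|\widetilde{x_{ij}}|\ge b)\ge b$ with $\E|\widetilde{x_{ij}}|^2=2$ via Markov yields $\Pr(b'^{-1}\ge|\widetilde{M_{ij}}|\ge b')\ge b'$ for an appropriate $b'>0$, and $\E\|M\|_{\HS}^2 = O(n^2)$ by hypotheses (i), (ii).

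The crux is an observation that bypasses individual anti-concentration for each $i$: via the LQ factorization $M = LQ$ (with $L$ lower triangular and $Q$ having orthonormal rows), Gram--Schmidt on the rows gives $|L_{ii}| = \dist(X_i, V_i)$, and the singular values of $M$ coincide with those of $L$. Since $L^{-1}$ is lower triangular with diagonal $L_{ii}^{-1}$, $s_1(L^{-1})\ge \max_i |L_{ii}|^{-1}$, equivalently $s_n(M) \le \min_i \dist(X_i,V_i)$. Thus a single application of \cref{thm:lsv-bound} with, say, $\epsilon = 1/\log n$ gives, with probability $1-o(1)$, the uniform lower bound $\dist(X_i,V_i) \ge 1/(\sqrt n \log n)$ for every $i$, whence
$$\frac{1}{n}\sum_{n-n^{0.99}\le i\le n}\log\dist(X_i/\sqrt n, V_i) \ge -O\!\left(\frac{n^{0.99}\log n}{n}\right) = -o(1).$$

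For the matching upper bound, I will use only the trivial inequality $\dist(X_i/\sqrt n, V_i) \le \|X_i\|_2/\sqrt n$ together with the global Hilbert--Schmidt bound $\sum_{i=1}^n\|X_i\|_2^2 = \|M\|_{\HS}^2 \le n^{2.01}$, which holds with probability $1-n^{-0.01}$ by Markov. Jensen's inequality for $\log$ then yields
$$\frac{1}{n}\sum_{n-n^{0.99}\le i\le n}\log\dist(X_i/\sqrt n, V_i) \le \frac{n^{0.99}}{2n}\log\!\left(\frac{1}{n^{0.99}\cdot n}\sum_{i}\|X_i\|_2^2\right) \le \frac{n^{0.99}\log(n^{0.02})}{2n} = o(1),$$
and the $\boldsymbol Y$-sum is handled identically. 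Combining the two bounds delivers the proposition.

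The only real obstacle I anticipate is conceptual rather than technical: a direct attack would try to lower-bound each $\dist(X_i, V_i)$ by applying \cref{prop:anticonc-LCD} to a unit normal of $V_i$, which requires an incompressibility analysis of this random normal (analogous to the treatment of $\nu$ in the proof of \cref{lemma:incomp-anti-concentration}), together with a union bound over $n^{0.99}$ indices and more delicate tail estimates than \cref{thm:lsv-bound} alone provides. The LQ observation collapses all these events into the single high-probability event $\{s_n(M) \ge n^{-1/2}/\log n\}$ directly furnished by \cref{thm:lsv-bound}, which is precisely why this note's main theorem is enough.
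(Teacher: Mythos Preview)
Your proof is correct and follows the same overall strategy as the paper: bound each $\dist(X_i/\sqrt n, V_i)$ between $n^{-O(1)}$ and $n^{O(1)}$ with probability $1-o(1)$, so that each log-term is $O(\log n)$ and the $n^{-1}\cdot n^{0.99}$ prefactor kills the sum. The technical devices differ slightly, however. For the lower bound, the paper invokes the negative second moment identity (\cref{lemma:negative-second-moment}) to get $\dist(X_i,V_i)\ge\dist(X_i,U_i)\ge n^{-1/2}\sigma_n(M)$, whereas your LQ/Gram--Schmidt observation yields the sharper $\dist(X_i,V_i)\ge\sigma_n(M)$ directly and avoids citing that lemma; either way one then plugs in \cref{thm:lsv-bound}. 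For the upper bound, the paper bounds $\|X_i\|\le\sigma_1(M)$ and controls $\sigma_1$ by Markov on $\E\|M\|_{\HS}^2$, while you use Jensen on $\sum_i\|X_i\|^2=\|M\|_{\HS}^2$; your route is marginally more direct since it never mentions $\sigma_1$. These are cosmetic differences---the substance, namely that \cref{thm:lsv-bound} alone suffices to control all $n^{0.99}$ distances simultaneously, is identical.
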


We can further reduce to proving the following high probability bounds on the extreme singular values of $A_{n}(\boldsymbol{X})$ and $A_{n}(\boldsymbol{Y})$.

\begin{proposition}
\label{prop:control-lsv-enough}
Let $A_{n}(\boldsymbol{X})$ and $A_{n}(\boldsymbol{Y})$ be as in the statement of \cref{thm:universality-esd}, and $z \in \C$ be fixed. Then, there exists an absolute constant $C > 0$ such that 

\begin{enumerate}
    \item $\Pr\left(\sigma_{1} (A_{n}(\boldsymbol{X})-z\sqrt{n}I) \geq  n^C \right) = o_n(1),$
    \item $\Pr\left(\sigma_{n} (A_{n}(\boldsymbol{X}) - z\sqrt{n}I) \leq  n^{-C} \right) = o_n(1),$
\end{enumerate}
and similarly for $A_n(\boldsymbol{Y})$.
\end{proposition}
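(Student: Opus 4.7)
The plan is to establish the two bounds separately, both by reducing to results already in the excerpt. Observe first that $B_n := A_n(\boldsymbol{X}) - z\sqrt{n}\,I$ has independent entries (the deterministic diagonal shift does not destroy independence), so \cref{thm:lsv-bound} and standard moment arguments apply directly to $B_n$.

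For part (1), I would use the crude bound $\sigma_1(B_n) \leq \|B_n\|_{\HS}$ together with the computation
\begin{align*}
\E\|B_n\|_{\HS}^2 = \sum_{i,j}\left(|\mu_{i,j}^{(n)} - z\sqrt{n}\,\delta_{i,j}|^2 + (\sigma_{i,j}^{(n)})^2\right) \leq 2\|M_n\|_{\HS}^2 + 2|z|^2 n^2 + \beta^2 n^2 = O(n^2),
\end{align*}
using assumptions (i) and (ii) of \cref{thm:universality-esd} together with $\E|x_{i,j}^{(n)}|^2 = 1$. Markov's inequality then gives $\Pr(\|B_n\|_{\HS}^2 \geq n^{2C}) = O(n^{2-2C}) = o_n(1)$ for any fixed $C > 1$.

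For part (2), the plan is to apply \cref{thm:lsv-bound} to $B_n$. Hypothesis (i) of that theorem is the same computation as above. The key step, and the main (modest) obstacle, is verifying hypothesis (ii), for which I would exploit the clean observation that the symmetrization kills every deterministic contribution: $\widetilde{(B_n)_{i,j}} = \sigma_{i,j}^{(n)} \widetilde{x_{i,j}^{(n)}}$. Since $\alpha \leq \sigma_{i,j}^{(n)} \leq \beta$, it suffices to find $b' \in (0,1)$ with $\Pr(b' \leq |\widetilde{x_{i,j}^{(n)}}| \leq (b')^{-1}) \geq b'$. The lower tail $\Pr(|\widetilde{x_{i,j}^{(n)}}| \geq b) \geq b$ is given; for the upper tail, $\E|\widetilde{x_{i,j}^{(n)}}|^2 = 2\,\mathrm{Var}(x_{i,j}^{(n)}) = 2$, so Markov's inequality yields $\Pr(|\widetilde{x_{i,j}^{(n)}}| > K) \leq b/2$ for $K$ sufficiently large depending on $b$. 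Intersecting the two events gives $\Pr(b \leq |\widetilde{x_{i,j}^{(n)}}| \leq K) \geq b/2$, which is hypothesis (ii) of \cref{thm:lsv-bound} with some $b'$ depending only on $\alpha, \beta, b$.

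Once the hypotheses are verified, \cref{thm:lsv-bound} gives $\Pr(\sigma_n(B_n) \leq \epsilon/\sqrt{n}) \leq C'(\epsilon + \exp(-c\epsilon^2 n))$. Taking $\epsilon = n^{-1/4}$ yields $\Pr(\sigma_n(B_n) \leq n^{-3/4}) = O(n^{-1/4} + \exp(-cn^{1/2})) = o_n(1)$; since $\{\sigma_n(B_n) \leq n^{-C}\} \subseteq \{\sigma_n(B_n) \leq n^{-3/4}\}$ for any $C \geq 3/4$ and all $n$ sufficiently large, this settles part (2) for all such $C$. Choosing, say, $C = 2$ then simultaneously satisfies both (1) and (2). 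The entire argument is short because the heavy lifting is already done by \cref{thm:lsv-bound}; the only genuinely new observation is the symmetrization-plus-Markov argument used to verify hypothesis (ii) from the weaker anti-concentration assumption imposed in \cref{thm:universality-esd}.
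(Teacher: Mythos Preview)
Your proposal is correct and follows essentially the same approach as the paper's own proof: bound $\sigma_1$ via $\sigma_1 \leq \|\cdot\|_{\HS}$ and Markov, and bound $\sigma_n$ by verifying the hypotheses of \cref{thm:lsv-bound} for $B_n$ using the key observation that symmetrization kills the deterministic shift together with a Markov bound for the upper tail, then apply the theorem with $\epsilon = n^{-1/4}$. The details match almost line for line.
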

Before proving \cref{prop:control-lsv-enough}, let us show how it implies \cref{prop:log-det-sum}. We will make use of the following linear algebraic fact. 

\begin{lemma}[Lemma A.4 in \cite{tao2010random}]
\label{lemma:negative-second-moment}
Let $A$ be an invertible $n\times n$ matrix with singular values $\sigma_{1}(A) \geq \dots \geq \sigma_{n}(A) > 0$ and rows $X_1,\dots, X_n \in \C^{n}$. For each $1\leq i \leq n$, let $U_i$ be the hyperplane generated by the $n-1$ rows $X_1,\dots,X_{i-1},X_{i+1},\dots,X_{n}$. Then,
$$\sum_{j=1}^{n}\sigma_{j}(A)^{-2} = \sum_{j=1}^{n}\dist(X_j,U_j)^{-2}.$$
\end{lemma}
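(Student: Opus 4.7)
The plan is to recognize both sides of the identity as two different evaluations of $\|A^{-1}\|_{\HS}^2$. The left-hand side falls out immediately: from the SVD $A=U\Sigma V^{*}$ one reads off $A^{-1}=V\Sigma^{-1}U^{*}$, so the singular values of $A^{-1}$ are $\sigma_j(A)^{-1}$, and consequently $\|A^{-1}\|_{\HS}^{2}=\operatorname{tr}(\Sigma^{-2})=\sum_{j=1}^{n}\sigma_j(A)^{-2}$.

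For the right-hand side, the plan is to expand the HS norm column-wise as $\|A^{-1}\|_{\HS}^{2}=\sum_{j=1}^{n}\|(A^{-1})_{\cdot,j}\|_{2}^{2}$ and give a geometric description of each column $B_{\cdot,j}$ of $B:=A^{-1}$ using the defining relation $AB=I$. Reading this relation row by row yields $X_i^{\top}B_{\cdot,j}=\delta_{ij}$ for all $i,j$, so $B_{\cdot,j}$ is the unique element of the one-dimensional $\C$-subspace $\{v\in\C^{n}:X_i^{\top}v=0\text{ for all }i\neq j\}$ satisfying the normalization $X_j^{\top}B_{\cdot,j}=1$. Writing $B_{\cdot,j}=c_j n_j^{\sharp}$ for any unit vector $n_j^{\sharp}$ spanning this line, the normalization forces $|c_j|=1/|X_j^{\top}n_j^{\sharp}|$. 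Once one checks that $|X_j^{\top}n_j^{\sharp}|=\dist(X_j,U_j)$, we obtain $\|B_{\cdot,j}\|_{2}^{2}=\dist(X_j,U_j)^{-2}$; summing over $j$ and equating with the SVD computation then delivers the identity.

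The only step that requires any care, and the one I would flag as the main obstacle, is the interplay between the $\C$-bilinear pairing $X_i^{\top}v$ that arises naturally from $AB=I$ and the Hermitian inner product $\langle\cdot,\cdot\rangle$ used to define the Euclidean norm and the notion of distance to a subspace on $\C^{n}$. These two pairings agree only up to complex conjugation in one argument, so the line $\{v:X_i^{\top}v=0\text{ for all }i\neq j\}$ is strictly speaking the Hermitian orthogonal complement of $\overline{U_j}:=\operatorname{span}_{\C}\{\overline{X_i}:i\neq j\}$ rather than of $U_j$ itself. Since complex conjugation is a Euclidean isometry on $\C^{n}$ that maps $U_j$ to $\overline{U_j}$ and $X_j$ to $\overline{X_j}$, we have $\dist(X_j,U_j)=\dist(\overline{X_j},\overline{U_j})$, and this latter distance is precisely the quantity $|X_j^{\top}n_j^{\sharp}|=|\langle X_j,\overline{n_j^{\sharp}}\rangle|$ computed above. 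Once this short bookkeeping is carried out, the identity becomes a one-line consequence of the SVD and the definition of matrix inverse.
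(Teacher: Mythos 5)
Your proof is correct and is essentially the standard argument: the paper does not prove this lemma itself but imports it from Tao and Vu \cite{tao2010random}, whose proof is exactly this double evaluation of $\|A^{-1}\|_{\HS}^{2}$ --- once via the singular values of $A^{-1}$ and once column-by-column, with the $j$-th column of $A^{-1}$ identified as the normal direction to $U_j$ normalized so that its pairing with $X_j$ equals $1$. Your conjugation bookkeeping for the bilinear-versus-Hermitian pairing is the right way to handle the complex case; the only point worth stating explicitly is that invertibility of $A$ is what makes $\{v: X_i^{\top}v=0 \text{ for all } i\neq j\}$ exactly one-dimensional and $\dist(X_j,U_j)>0$.
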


\begin{proof}[\cref{prop:control-lsv-enough} implies \cref{prop:log-det-sum}]
For $1\leq i \leq n$, let $U_i$ denote the hyperplane generated by the $n-1$ rows $X_1,\dots,X_{i-1}, X_{i+1},\dots,X_{n}$ of $A_{n}(\boldsymbol{X}) - \sqrt{n}zI$. First, note that
$$\frac{1}{\sqrt{n}}\dist(X_i, U_i) = \dist\left(\frac{1}{\sqrt{n}}X_i, U_i\right) \leq \dist\left(\frac{1}{\sqrt{n}}X_i, V_i\right) \leq \frac{1}{\sqrt{n}}\dist(X_i, 0) = \frac{1}{\sqrt{n}}\|X_i\|,$$
and similarly for $A_{n}(\boldsymbol{Y}) - \sqrt{n} z I$. 
Next, by  \cref{lemma:negative-second-moment},
$$\dist\left(\frac{1}{\sqrt{n}} X_i, V_i\right) \geq \dist\left(\frac{1}{\sqrt{n}} X_i, U_i\right) 
\geq \frac{1}{n}\sigma_{n}(A_{n}(\boldsymbol{X})-\sqrt{n}zI),$$
and similarly for $A_{n}(\boldsymbol{Y}) - \sqrt{n}zI$. 

Therefore, \cref{prop:log-det-sum} follows if we can show that, except with probability $o_{n}(1)$, $\|X_i\| \leq n^{O(1)}$ (for all $1\leq i \leq n$), $\sigma_{n}(A_{n}(\boldsymbol{X})- \sqrt{n}zI) \geq n^{-O(1)}$, and similarly for $A_{n}(\boldsymbol{Y}) - \sqrt{n}zI$. Indeed, in this case, except with probability $o_n(1)$, each summand of the sum appearing in \cref{prop:log-det-sum} is bounded in absolute value by $O(\log{n})$, so that the entire sum is bounded in absolute value by
$$ \frac{1}{n}\cdot O(\log{n})\cdot n^{0.99} \leq O\left(\frac{1}{n^{0.001}}\right).$$

Finally, note that for all $1\leq i\leq n$, $\|X_i\| \leq \sigma_{1}(A_n(\boldsymbol{X})- \sqrt{n} zI)$, so that the desired probability bounds on $\|X_i\|$ and $\sigma_{n}(A_n(\boldsymbol{X}))$ (and similarly for $A_n(\boldsymbol{Y})$) follow from \cref{prop:control-lsv-enough}. 
\end{proof}

\begin{proof}[Proof of \cref{prop:control-lsv-enough}]
\textbf{Bound on $\sigma_{1}$: }By the triangle inequality for $\sigma_{1} (= \|\cdot \|)$, it suffices to show that there is an absolute constant $C > 0$ such that $\Pr\left(\sigma_{1}(A_{n}(\boldsymbol{X})) \geq n^{C}\right) = o_{n}(1)$. Note that by assumptions (i), (ii) and 1. in the statement of \cref{thm:universality-esd}, we have $\E[\|A_{n}(\boldsymbol{X})\|_{\HS}^{2}] = O(n^{C'})$, so that $\E[\sigma_{1}^{2}(A_{n}(\boldsymbol{X}))] = O(n^{C'})$. The desired conclusion now follows from Markov's inequality.\\ 

\textbf{Bound on $\sigma_{n}$: }
We begin by verifying that $P:= A_{n}(\boldsymbol{X}) - \sqrt{n}zI$ satisfies the assumptions of \cref{thm:lsv-bound}. An identical argument works for $A_{n}(\boldsymbol{Y}) - \sqrt{n} z I$ as well. 

Assumptions (i), (ii), and 1.\ of \cref{thm:universality-esd} show that $\E \sum_{i,j}|P_{i,j}|^{2} \leq Kn^{2}$ for some $K > 0$.

Moreover, assumptions 1.\ and 2.\ of \cref{thm:universality-esd} show that there exists some $b' \in (0,1)$ such that $\Pr(b'^{-1}|\widetilde{P_{i,j}}| \geq b') \geq b'$ for all $i,j$ -- indeed, assumption 2.\ of \cref{thm:universality-esd} shows that $\Pr(|\widetilde{P_{i,j}}|\geq b/\beta)\geq b$, whereas assumption 1.\ shows that $\E[|\widetilde{P_{i,j}}|^{2}] \leq \beta^{2}$, so that by Markov's inequality, $\Pr(|\widetilde{P_{i,j}}|\geq \beta \cdot \sqrt{2/b} ) \leq b/2$, so that we can conclude using the union bound and taking $b'$ to be sufficiently small.

Finally, we can apply \cref{thm:lsv-bound} to $P$ with $\epsilon = n^{-1/4}$ (say) to obtain the desired conclusion. 
\end{proof}

\bibliographystyle{abbrv}
\bibliography{least-singular-value}

\appendix

\end{document}